\title[Poincar\'e polynomial for FC elements in type $A$ ]{Poincar\'e polynomial for fully commutative elements in the symmetric group}
\author{Sadek AL HARBAT and Corinne BLONDEL}
\address{} 
\email{sadekharbat@inst-mat.utalca.cl;  cblondel@math.univ-paris-diderot.fr}
\date{\today}
\thanks{Sadek Al Harbat was supported by Fondecyt Postdoctoral grant 3170544.}
\let\mathbb\mathds
\newtheorem{theorem}{Theorem}[section]
\newtheorem{definition}[theorem]{Definition}
\newtheorem{proposition}[theorem]{Proposition}
\newtheorem{lemma}[theorem]{Lemma}
\newtheorem{corollary}[theorem]{Corollary}
\newtheorem{remark}[theorem]{Remark}
    \newlength{\myarrowsize} 
    \newlength{\myoldlinewidth}
\tikzstyle{vecArrow} = [thick, decoration={markings,mark=at position
\tikzstyle{innerWhite} = [semithick, white,line width=1.4pt, shorten >= 4.5pt]
\newcommand\dyckpath[3]{

	\draw[help lines] (#1) grid +(#2,#2);
	\draw[dashed] (#1) -- +(#2,#2);
	\coordinate (prev) at (#1);
	\foreach \dir in {#3}{
		\ifnum\dir=0
		\coordinate (dep) at (1,0);
		\else
		\coordinate (dep) at (0,1);
		\fi
		\draw[line width=2pt,] (prev) -- ++(dep) coordinate (prev);
	};
}
	\newcommand\POSITION[3]{%
	\begingroup
	\@tempdim@x=0cm
	\@tempdim@y=\paperheight
	\advance\@tempdim@x#1
	\advance\@tempdim@y-#2
	\put(\LenToUnit{\@tempdim@x},\LenToUnit{\@tempdim@y}){#3}%
	\endgroup
	}
\begin{document}

\maketitle

\begin{epigraphs}
\qitem{{\it S.Rogers:} We need a plan of attack!}%
{}
\qitem{{\it T.Stark:} I have a plan: ... attack!!}%
{
 \textsc{Avengers 2012, M.C.U.}}
\end{epigraphs}


	\begin{abstract}
	Let $W^c(A_n)$ be the set of  fully commutative elements of the Coxeter group $W(A_n)$. Let 
$$
a_n(q)= \sum_{w \in W^c(A_n)} q^{l(w)}   . 
$$
We compute  $a_n(q)$. 
	\end{abstract}

\tableofcontents
	
\section{Introduction}

{\it Full-commutativity. }  
In a Coxeter system $(W,S)$ a fully  commutative element (say an FC element) is an element of which any reduced expression can be arrived to from any other by commutation relations. 
 
We focus here on the Coxeter system $(W(A_n), S)$, 
$S=\{\sigma_1, \cdots , \sigma_n\}$,  that can be viewed as   the symmetric group $\mathfrak S_{n+1}$ of permutations of 
$\{1, \cdots, n+1\}$, generated by the $n$   elementary transpositions 
$\sigma_i=(i, i+1)$ for $1\le i \le n$.   
FC elements in $W(A_n)$ (AKA {\it $321$-avoiding permutations} of the symmetric group), forming the subset denoted by $W^c(A_n)$,    were used in the famous work of  V. Jones \cite{Jones_1986}  before they were officially defined and studied, for example  in the works of Graham and Stembrige \cite{Graham,St96}. Nowadays the study of full-commutativity has taken its own place, forming a nice theory relating the Coxeter groups theory to many others: not starting by diagram algebras and not ending by algebraic combinatorics.   \\  

 {\it Poincar\'e polynomials. }
  For a given  Coxeter system $(W,S)$ and a subset 
$H$ of $W$ we define:

\begin{equation}\label{Poincare}
H(q)=  \sum_{0\leq i} \# \{ w \in H; l(w)=i \} \   q^{i} = \sum_{w \in H} q^{l(w)}.
\end{equation} 

Among the various conventions in the literature, we choose, following Bourbaki  \cite{Bourbaki_1981}, Humphreys \cite{Humphreys},  Bjorner and Brenti 
\cite{BB},  to call it the Poincar\'e series of $H$ (or Poincar\'e polynomial of $H$, if $H$ is finite) rather than another meaningful name: "the Coxeter length generating fonction" of $H$. 
Actually we focus on the series of polynomials $a_n=W^c(A_n)(q)$. Let $$
f(x,q)= \sum_{0\leq n}  a_n x^{n}.
$$
 In the literature many authors, seeking brevity, refer to $f(x,q)$ as the length generating fonction of $W^c(A_n)$, when it is really  the generating fonction of the Poincar\'e polynomial or the generating fonction of the length generating fonction, not to mention the fact that sometimes  even a three-variable series $f(x,y,q)$   is called a length generating fonction.    
So we choose to call $a_n$ the Poincar\'e polynomial of $W^c(A_n)$ for the sake of distinguishing it from  generating fonctions. \\

  A generating fonction of the Poincar\'e polynomial for  FC elements in type $A$ was studied in \cite{Barcucci2001}  and was extended to the  affine type  $\tilde A$ in \cite{HJ}, while another expression for  $\tilde A$ and formulas  for FC elements in other affine cases appear in   \cite{Biagioliandal}. In this work we do not use  generating fonctions, not even as a technical tool. In a forthcoming work we compute directly the Poincar\'e polynomial of  $W^c(\tilde A_n)$, in which $W^c(A_n)$ forms the subset of elements of  affine length  0, which is the most difficult case, while for affine lengths equal to and greater than 1, the computation is remarkably easier,  using the normal form  established in \cite{A2016}, where the notion of affine length is defined.  \\

More generally,     our method  of computation of the Poincar\'e series,  
starting from the normal forms  of FC elements in the four infinite families of affine Coxeter groups, 
namely $\tilde A$, $\tilde B$, $\tilde C$  and   $\tilde D$, established by the first author,    
reduces this computation to elements of affine length $0$, that is, to the Poincar\'e polynomial for 
FC elements in the three infinite families of  finite Coxeter groups $A$, $B$ and $D$. Among those three,  
the case $A_n$ is the generic case. From these facts comes the importance of the   Poincar\'e polynomial for FC elements in type $A$ which is the focus of this work.  \\   

{\it  Catalan level. }  
 The starting  point of this work is a partition of $W^c(A_n)$,  the cardinal of which  is 
the famous Catalan number 
$C_{n+1}= \frac{1}{n+2}\binom{2(n+1)}{n+1}$. In   a future paper we explain many other partitions to re-count $W^c(A_n)$ and to count many distinguished subsets  of $W^c(A_n)$, that give many interesting partitions of the Catalan number among which  Narayana numbers and the Catalan triangle, the latter coming  from the very partition that we use in this work (see Remark \ref{triangle});  we go down from the polynomials to the numbers related to the Catalan number by specializing $q$ to 1. In this work we compute explicitly  $\sum_{w \in W^c(A_n)} q^{l(w)} $ rather than  $\sum_{0\leq i} \#\{ w \in W^c(A_n); l(w)=i \} q^{i}$, in which specializing $q$ to $1$    gives a  new partition of the Catalan number with the "Coxeter" color all over it, so we have an explicit "pretty sophisticated" $a_n$, which is the next-to-last  step to answer the question: let $r$ be a positive integer, how many  elements do we have in $W^c(A_n)$ which are of length equal to $r$? That is: explaining the obvious equality of  definition (\ref{Poincare}), supposing that $H$ is $W^c(A_n)$.

 \bigskip
The paper is organized as follows. 

In section 2, we recall a  normal form for FC elements in $W(A_n)$, following Stembridge,   and 
we  partition the set $W^c(A_n)$  into $\{1\}$ and the  subsets  $A_n^j$, 
$1 \le  j \le n$, of FC elements having a normal form with rightmost element the $j$-th  generator.  
We write a recurrence relation for the  Poincar\'e polynomials $a_n^j=A_n^j(q)$ and obtain the quite intriguing fact that  $a_n^j$ is a linear combination of $a_{n-1}$, ..., $a_{n-j}$ over $\mathbb Z[q]$, 
with coefficients depending on $j$, not on $n$, namely 
(\ref{expressionofanj}):  
$$
   a_n^{ j}  = \sum_{k=1}^j  q^{j-k+1} B_{j}^k(q) a_{n-k}    \quad (1 \le j \le n)      . 
$$
where the family  of polynomials  $ B_{j}^k$ in $\mathbb Z[q]$ is uniquely determined (Proposition \ref{propanj}). This leads us to the main recurrence relation (\ref{recurrenceforan})   for the Poincar\'e polynomial: 
$$
  q^{n} a_{n-1}  =   q  + q^2 + \cdots + q^n  -  \sum_{k=2}^n    q^{n-k+1} B_{n}^k   a_{n-k}  \qquad (n\ge 1). 
$$
We observe that the value at $1$ of the polynomial $B_j^k$ is plus or minus a binomial coefficient 
and that,  by
 specializing $q$ to $1$,  we obtain a recurrence relation for Catalan numbers very similar to the one attributed by Stanley to Ming Antu in \cite[B1]{Stanley}. \\

In section 3 we proceed to the computation of the polynomials  $B_j^k$. It is in fact more convenient to compute the polynomials $b_j^k$ defined by  $b_j^k= B_j^{j-k+1}$. The family $b_j^k$ for fixed $j$ and variable $k$ can be viewed as a family of polynomials 
intermediate between $  b_{j}^{j-1} =  1-q^2 - \cdots - q^j
$  and    $ b_j^1=   (1-q^2)  \cdots   (1-q^j)$ and indeed we are led to define polynomials 
$ \Pi(a,b) = \prod_{a\le i \le b}(1-q^i)$ for $1\le a \le b$ and 
$\Sigma\Pi (a,b)[e_1, \cdots,  e_u]  $,  the sum of all possible products obtained from $ \Pi(a,b) $ 
by removing   $e_i$ consecutive terms $(1-q^j)\cdots (1-q^{j+e_i-1})$ 
and replacing them by $(-q^j)$.  Using these as basic bricks, we 
  obtain in Theorem \ref{polbkj} a formula for $b_j^k$.   
 We do not know if this family of polynomials has been used in other contexts. \\   

We start section 4 with a general expression for a sequence satisfying a recurrence relation of the same shape as (\ref{recurrenceforan}) (Proposition \ref{generalprop2}). 
As a direct application, we get an expression for the Poincar\'e polynomial $a_n$ 
 in Theorem \ref{maintheorem}. Our goal is achieved, yet   we find of interest to record a property of the sequence $a_n$ that we noticed on the way, this is Proposition \ref{Cnnandan} that says that, up to a shift, the sequence $(a_n)$ is the first column of the inverse matrix of $(b_j^k)$. This leads to a slightly different formula for $a_n$, to finish we write   both formulas extensively.   \\

We thank Mathieu Florence and Luc Lapointe for their challenging  comments.

\section{Main recurrence relation and its  general solution}

\subsection{Fully commutative elements of type $A$}\label{notations}

			Consider the $A$-type Coxeter group  $W(A_{n})$ ($n$ being a positive integer; we let   $W(A_{0})= 1$) that has  the following presentation by  generators 
$\{\sigma_1, \cdots, \sigma_n\}$ and relations:    \\
		
				\begin{itemize}[label=$\bullet$, font=\normalsize, font=\color{black}, leftmargin=2cm,parsep=0cm, itemsep=0.25cm,topsep=0cm]

\item $\sigma_i^2= 1$ for $1\leq i\leq n$;  

					\item braid relations: 
	\begin{itemize}   
	\item 
$\sigma_{i}\sigma_{i+1}\sigma_{i} = \sigma_{i+1}\sigma_{i}\sigma_{i+1}$ where $1\leq i\leq n-1$, 
\item   commutation relations:

$\sigma_{i} \sigma_{j} =\sigma_{j} \sigma_{i} $  where $1\leq i,j\leq n$ and $ \left| i-j\right| \geq 2$.	\\	
\end{itemize}
    
				\end{itemize}

Any element $w$ in   $W(A_{n})$  can be expressed as a product of generators, say 
$w=\sigma_{i_1} \sigma_{i_2} \cdots \sigma_{i_s}$ with $s\ge 0$ (for $s=0$ we have an empty product,  equal to  $1$).
 Among such expressions of $w$, the ones for which $s$ is minimal   are called {\it reduced} and the corresponding (minimal) value of $s$  is called the (Coxeter) length of $w$ and denoted by $l(w)$.

      \begin{definition}
			Elements $w$ in   $W(A_{n})$  for which one can pass from any reduced expression to any other one only by applying commutation relations are called {\rm fully commutative elements}. We denote  by $W^c(A_{n})$ the set of fully commutative elements in $W(A_{n})$. 
		    \end{definition}

Our aim in this paper is the computation of the length polynomial of fully commutative elements  in $W(A_{n})$, namely, denoting by $q$ the indeterminate:   
 $$
a_n = a_n(q) = \sum_{w \in W^c(A_{n})}  q^{l(w)}   \quad    (n \ge 0).
$$
	
 The basis of our enumeration of $W^c(A_n)$    is the following normal form of fully commutative elements, for which we refer to  Stembridge.  	 We let: 		
$$
\begin{aligned}
\lceil  i,j \rceil  &= \sigma_i \sigma_{i+1} \dots \sigma_j   \    \text{ for } 1\le i\le j \le n \     . 
  \end{aligned}
$$

\begin{theorem}\label{1_2}{\rm \cite[Corollary 5.8]{St}}
 Let $n$ be a positive integer, then   $W^c(A_n)$ is the set of elements of the   form: 
 \begin{equation}\label{eq:Stembridge}
 \lceil i_1, j_1 \rceil \lceil i_2, j_2 \rceil \dots  \lceil i_p, j_p \rceil  , 
 \text{ with } 0\le p \le n  \text{ and } 
 \left\{ \begin{matrix}
 n\ge j_1 > \dots > j_p \ge 1 ,  \cr    n\ge i_1 > \dots > i_p \ge 1,   \cr
 j_t \ge i_t   \text{ for }  1\le t \le p.  
 \end{matrix}\right.
 \end{equation}
\end{theorem}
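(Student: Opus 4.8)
The plan is to establish the two inclusions separately, working directly with reduced words and with the partial order induced on the letters of a reduced word (the \emph{heap}).

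First, to see that every element $w=\lceil i_1,j_1\rceil\lceil i_2,j_2\rceil\cdots\lceil i_p,j_p\rceil$ subject to the three displayed conditions lies in $W^c(A_n)$, I would show it is reduced and then fully commutative. Reducedness I would obtain by induction on $p$: the segment $\lceil i_p,j_p\rceil=\sigma_{i_p}\sigma_{i_p+1}\cdots\sigma_{j_p}$ is reduced of length $j_p-i_p+1$, and the strict inequalities $j_{p-1}>j_p$, $i_{p-1}>i_p$ are exactly what is needed to make the length increase by one at each letter appended on the right of $\lceil i_1,j_1\rceil\cdots\lceil i_{p-1},j_{p-1}\rceil$; concretely one checks $l(w)=\sum_{t=1}^p(j_t-i_t+1)$ by writing down the one-line notation of the permutation $w$ and counting its inversions. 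Full commutativity I would get from the heap criterion, i.e. by showing that no word commutation-equivalent to $w$ contains a factor $\sigma_a\sigma_{a+1}\sigma_a$ or $\sigma_a\sigma_{a-1}\sigma_a$: the occurrences of a fixed $\sigma_a$ in $w$ come precisely from the segments $\lceil i_t,j_t\rceil$ with $i_t\le a\le j_t$, and the strict decreases $j_t>j_{t+1}$, $i_t>i_{t+1}$ force an occurrence of $\sigma_{a-1}$ and an occurrence of $\sigma_{a+1}$ to sit between any two consecutive occurrences of $\sigma_a$ (whenever the indices are in range), which blocks the convex chain a braid move would require. (Alternatively, one recognizes $w$ as a $321$-avoiding permutation.)

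For the converse, that every $w\in W^c(A_n)$ has the form \eqref{eq:Stembridge}, I would induct on $n$ (equivalently on $|\operatorname{supp}(w)|$). Let $w\neq 1$ and $j=\max\operatorname{supp}(w)$. The first lemma is that $\sigma_j$ occurs exactly once in every reduced word for $w$: between two consecutive occurrences every letter commutes with $\sigma_j$ except $\sigma_{j-1}$ (there is no $\sigma_{j+1}$), there must be at least one $\sigma_{j-1}$ there (else the word is not reduced) and at most one (else, looking at two consecutive $\sigma_{j-1}$'s, one gets either a non-reduced word or a braidable factor $\sigma_{j-1}\sigma_{j-2}\sigma_{j-1}$), hence a braidable factor $\sigma_j\sigma_{j-1}\sigma_j$ --- a contradiction. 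Granted this, from a reduced word $a\,\sigma_j\,b$ with $\operatorname{supp}(a),\operatorname{supp}(b)\subseteq\{1,\dots,j-1\}$ I extract the longest increasing run ending at the unique $\sigma_j$: the part of $a$ to the right of its last $\sigma_{j-1}$ has support $\subseteq\{1,\dots,j-2\}$, hence commutes past $\sigma_j$; the part to the left contains no further $\sigma_{j-1}$ (else the same contradiction --- non-reduced word, or braidable factor --- as above); so the step repeats with $j-1$ in place of $j$, and iterating produces $w=\lceil i_1,j\rceil\,c$ with $c$ reduced, $\operatorname{supp}(c)\subseteq\{1,\dots,j-1\}$, and $c$ fully commutative (a suffix of a reduced word for $w$). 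By induction $c=\lceil i_2,j_2\rceil\cdots\lceil i_p,j_p\rceil$ of the required form, so, setting $j_1=j$, we have $j_1>j-1\ge j_2$, the inequalities hold for $t\ge 2$, and $j_1\ge i_1$ is clear. It remains to check $i_1>i_2$ when $p\ge 2$: if $i_1\le i_2$, then $\sigma_{i_2}$ and $\sigma_{i_2+1}$ both occur in $\lceil i_1,j_1\rceil$ (using $i_2\le j_2\le j_1-1$), and commuting the leading $\sigma_{i_2}$ of $\lceil i_2,j_2\rceil$ leftward past $\sigma_{j_1},\sigma_{j_1-1},\dots,\sigma_{i_2+2}$ (all of which commute with it) exhibits the factor $\sigma_{i_2}\sigma_{i_2+1}\sigma_{i_2}$, contradicting full commutativity. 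This closes the induction.

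The genuine obstacle, shared by both directions, is making rigorous the repeated assertion that ``no braid move can be applied''. The cleanest way is to set up, once and for all, the heap of a fully commutative element --- a labelled poset whose restriction to each column $a$ is a chain, full commutativity being the requirement that certain convex sub-chains do not occur --- and then to read the heap from its highest column downwards; this reading is precisely what yields the segments $\lceil i_t,j_t\rceil$ and forces the interleaving inequalities. Once heaps are in place, the inductive length computation for \eqref{eq:Stembridge} and the ``unique top generator'' lemma are short, and the whole argument reduces to bookkeeping.
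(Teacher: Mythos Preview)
The paper does not prove this statement: it is quoted verbatim from \cite[Corollary~5.8]{St} and used as the starting point for the enumeration, with no argument given. So there is nothing in the paper to compare your proof against.

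Your sketch is essentially the standard direct argument and is sound in outline. A couple of places deserve a little more care. In the converse direction, when you assert that ``the part to the left contains no further $\sigma_{j-1}$'', the justification you give (``else the same contradiction'') is really a nested induction: two occurrences of $\sigma_{j-1}$ in $a$ force, between them, a $\sigma_{j-2}$ but no $\sigma_j$, and one must iterate downward to produce the braidable factor. This is fine, but it is worth stating the induction explicitly rather than leaving it as ``the same contradiction''. Similarly, the phrase ``so the step repeats with $j-1$ in place of $j$'' hides the fact that at each stage you are using not that $\sigma_{j-k}$ is globally unique in $w$ (it need not be), but only that it is unique in the prefix under consideration; this is exactly what the previous step guarantees, and is the point that makes the iteration go through.

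Stembridge's own proof is organised differently: he works in the generality of all finite simply-laced types, sets up the heap poset of a fully commutative element, and reads off the normal form from the linear extensions of the heap. Your argument is the type-$A$ specialisation carried out by hand, with the heap language appearing only as a remark at the end. The advantage of your approach is that it is self-contained and requires no poset machinery; the advantage of Stembridge's is uniformity across types and a cleaner separation between the structural fact (the shape of the heap) and the enumerative consequence (the normal form).
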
 

 It is well-known that the number of FC elements in $W^c(A_n)$ is the Catalan number 
$C_{n+1}= \frac{1}{n+2}\binom{2(n+1)}{n+1}$. We refer the reader to  \cite{St}
for comments on this fact, and 
to the list of 
214 appearances of the Catalan number established by Stanley in \cite{Stanley}: appearance number 107   indeed counts sequences of integers  satisfying (\ref{eq:Stembridge}).

\subsection{Groundwork: two recurrence relations} 

For $1\le j \le n$, 
we define $ A_n^{ j} $ as the set of fully commutative elements  in $W(A_n)$ given by a normal form
(\ref{eq:Stembridge}) that ends  with $\sigma_j$ on the right, in other terms such that $j_p=j$.  We are looking for the length polynomial of these elements:
$$
a_n^{ j} = a_n^j(q) = \sum_{w \in A_n^{ j}}  q^{l(w)}    \text{ for } 1 \le j \le n.
$$
We   make the convention that $ a_n^{ j} = 0$ for $j > n$ and  $ a_n^{0}=0$, for a reason that will appear soon. The length polynomial of  elements in $W^c(A_n)$ is 
$$
a_n =1+  \sum_{j=1}^n   a_n^{ j}  \quad (n\ge 1),   \quad   a_0=1 . 
$$

\begin{remark}\label{triangle}

We have already counted $a_n^{ j} (1)$ which is the number of FC elements ending with $\sigma_j$ on the right, that is:
$$
a_n^{ j} (1)= \frac{j}{n+1}  \binom{2n-j+1}{n}. 
$$

And this is but the famous Catalan's triangle, of which we give a full description in a forthcoming work. 
In other words, the relation above is a $q$-version of the Catalan triangle. 
\end{remark}

\medskip 

We partition  $ A_n^{ j} $  as follows. Let $w \in A_n^{ j}$ for $1 \le j \le n$, given by its normal form (\ref{eq:Stembridge}). Then either 
$\sigma_1$ does not appear in $w$, i.e. $i_p>1$, hence   $w $ corresponds, upon shifting the generators ($\sigma_i \mapsto \sigma_{i-1}$ for $i \ge 2$),  to a unique $w'\in A_{n-1}^{ j-1}$ with the same length; 
or $\sigma_1$ appears in $w$, i.e. $i_p=1$,  hence the rightmost  bloc in $w$ is 
$\sigma_1 \cdots \sigma_j$, of length $j$, and it is preceded on the left by either $1$ or an element that, upon the same shift as before,  belongs to $W^c(A_{n-1})$  with a normal form ending with some $\sigma_s$ with $s$ at least equal to $j$. We obtain:
\begin{equation}\label{basicrelation}
a_n^{ j} =  a_{n-1}^{ j-1} + q^{j} (1+ \sum_{s=j}^{n-1}  a_{n-1}^{s})   \quad (1\le j  \le n) . 
\end{equation}
We note that this relation does not hold for $j>n$ or $j=0$. For $j=1$   we get  
$$
  a_n^{1} = q a_{n-1}.   
$$
For $n \ge j\ge 2$ we can write as well 
	$$a_n^{j-1} = a_{n-1}^{j-2} + q^{j-1}(1+  \sum_{s=j-1}^{n-1}  a_{n-1}^{s}). $$
Substracting   $q$ times this last equation to  the previous one, we obtain  
\begin{equation}\label{basicrecurrence}
   a_n^{ j}  -q a_n^{j-1} = a_{n-1}^{ j-1}-  q   a_{n-1}^{j-2}          - q^j 
     a_{n-1}^{j-1}  \quad (2 \le j \le n)  .     
\end{equation}    
Using this relation for $j=2$ gives:
$$
a_n^2= q a_n^1 + (1-q^2) a_{n-1}^1 = q^2 a_{n-1} + q(1-q^2) a_{n-2}  \quad (n \ge 2) .  
$$

We claim the following: 

\begin{proposition}\label{propanj}
The initial conditions 
$$B_{j}^1(q)=1 \text{ for } j\ge 1,  \qquad 
B_{j}^j(q)=   (1-q^2) \cdots (1-q^j)   \text{ for } j\ge 2  $$  
and  the recurrence relation:  
\begin{equation}\label{recurrenceBkj}
 B_{j}^k(q)   =  B_{j-1}^k(q)+(1-q^j)  B_{j-1}^{k-1}(q)-
 B_{j-2}^{k-1}(q)    \quad (2 \le k \le j-1)      
\end{equation}
define  a unique family of polynomials $(B_{j}^k)_{1\le k \le j}$, in the variable $q$, with integer coefficients. Those  polynomials satisfy  $B_{j}^k(0)=1$. We have the following equality: 
\begin{equation}\label{expressionofanj}
   a_n^{ j}  = \sum_{k=1}^j  q^{j-k+1} B_{j}^k(q) a_{n-k}    \quad (1 \le j \le n)      . 
\end{equation}
\end{proposition}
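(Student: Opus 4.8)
The plan is to prove the three assertions in tandem by induction on $j$, the key point being that formula (\ref{expressionofanj}) for $a_n^j$ and the recurrence (\ref{recurrenceBkj}) for the $B_j^k$ are really two faces of the same computation. First I would dispose of the base cases $j=1$ and $j=2$: the identities $a_n^1 = q\,a_{n-1}$ and $a_n^2 = q^2 a_{n-1} + q(1-q^2)a_{n-2}$ computed just before the statement show that (\ref{expressionofanj}) holds with $B_1^1 = 1$ and $B_2^1 = 1$, $B_2^2 = 1-q^2$, which match the stated initial conditions; and $B_j^k(0)=1$ is visible in each. Uniqueness of the family $(B_j^k)_{1\le k\le j}$ is immediate once one checks that the initial data $B_j^1$ and $B_j^j$ together with (\ref{recurrenceBkj}) determine every $B_j^k$: for fixed $j$, the values $k=1$ and $k=j$ are prescribed, and (\ref{recurrenceBkj}) expresses $B_j^k$ for $2\le k\le j-1$ in terms of the previous row, so a straightforward double induction (on $j$, then on $k$) pins the family down and shows the entries lie in $\mathbb Z[q]$.

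For the inductive step I would fix $j\ge 2$, assume (\ref{expressionofanj}) holds for $j-2$ and $j-1$ (with the conventions $a_n^0 = 0$, $B_{j}^{k}=0$ for $k>j$ or $k\le 0$, which is exactly why those conventions were introduced), and feed these into the relation (\ref{basicrecurrence}), namely
$$
a_n^{j} - q\,a_n^{j-1} = a_{n-1}^{j-1} - q\,a_{n-1}^{j-2} - q^{j}\,a_{n-1}^{j-1}.
$$
Substituting the induction hypotheses into the three terms on the right-hand side and into the term $q\,a_n^{j-1}$ on the left, and collecting the coefficient of $a_{n-k}$ on each side, one is left with an identity of the form $a_n^j = \sum_{k} q^{j-k+1} C_j^k\, a_{n-k}$ where each $C_j^k$ is an explicit combination of $B_{j-1}^{k-1}$, $B_{j-2}^{k-1}$ and (from the $q\,a_n^{j-1}$ term, after reindexing) $B_{j-1}^{k-1}$ again. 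Careful bookkeeping of the powers of $q$ — the $a_{n-1}^{\bullet}$ on the right contributes $a_{(n-1)-k'} = a_{n-(k'+1)}$, shifting indices by one, and the extra $q^j$ factor in the last term — yields precisely $C_j^k = B_{j-1}^k + (1-q^j)B_{j-1}^{k-1} - B_{j-2}^{k-1}$ for $2\le k\le j-1$, together with $C_j^1 = B_{j-1}^1 = 1$ and $C_j^j = (1-q^j)B_{j-1}^{j-1} = (1-q^2)\cdots(1-q^j)$. Defining $B_j^k := C_j^k$ then simultaneously verifies (\ref{recurrenceBkj}), the initial conditions, and (\ref{expressionofanj}) for $j$; and $B_j^k(0)=1$ follows by setting $q=0$ in (\ref{recurrenceBkj}) using $B_{j-1}^k(0)=B_{j-1}^{k-1}(0)=1$ and $B_{j-2}^{k-1}(0)=1$.

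The main obstacle I anticipate is purely bookkeeping: keeping the index shifts and the $q$-powers straight when passing from the $a$-recurrence (\ref{basicrecurrence}), which involves $a_{n-1}^{\bullet}$, to a clean statement about $a_{n-k}$ for $1\le k\le j$, and making sure the edge terms $k=1$ and $k=j$ emerge correctly from the conventions on vanishing $B$'s and $a_n^0=0$ rather than from the interior recurrence. One subtlety worth flagging is that (\ref{basicrecurrence}) is only valid for $2\le j\le n$, so strictly speaking (\ref{expressionofanj}) is an identity between polynomials that we derive for all $n\ge j$ and which then holds identically; the induction should be organized so that the polynomial identity for $B_j^k$ is extracted once and the statement about $a_n^j$ for every admissible $n$ follows. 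Apart from that, every step is a routine manipulation, and no deeper input than Theorem \ref{1_2} and the two displayed recurrences is needed.
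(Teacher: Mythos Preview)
Your proposal is correct and follows essentially the same approach as the paper: both proofs derive (\ref{expressionofanj}) by induction on $j$, feeding the inductive hypotheses for $a_n^{j-1}$, $a_{n-1}^{j-1}$, $a_{n-1}^{j-2}$ into the recurrence (\ref{basicrecurrence}) and reading off the coefficient of each $a_{n-k}$ to recover (\ref{recurrenceBkj}) together with the boundary values $B_j^1=1$ and $B_j^j=(1-q^j)B_{j-1}^{j-1}$. The only cosmetic difference is organizational: the paper first establishes existence and uniqueness of the family $(B_j^k)$ abstractly (picturing the grid and inducting on $k$ from the bottom row $B_j^1$ and the diagonal $B_k^k$) and then proves (\ref{expressionofanj}) separately, whereas you weave the construction of $B_j^k$ into the same induction on $j$ that proves (\ref{expressionofanj}); the computations are identical.
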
  
\begin{proof} 
Existence and unicity of the family  $(B_{j}^k)_{1\le k \le j}$ are an immediate consequence of the recurrence relation (\ref{recurrenceBkj}): thinking of $(j,k)$ on a grid with $j$ on the $x$ axis and $k$ on the $y$ axis, we see that the knowledge of $B_\ast^{k-1} $, corresponding to the line $y=k-1$,  and of $B_{k}^k$, the leftmost point on the line $y=k$,  
implies the knowledge of $B_\ast^k$, i.e. the line $y=k$. Since the initial conditions give us the bottom line $k=1$ and the diagonal $j=k$, we are done. The value at $0$ is easy. 

We now  prove  (\ref{expressionofanj})  by induction on $j$. The cases 
$j=1$ and  $j=2$ have been established above. We assume that (\ref{expressionofanj}) holds for any $t$ with $
1 \le t <j\le n$ and we use (\ref{basicrecurrence}) to prove it for $j$, $3 \le j \le n$. Indeed:  
$$
\begin{aligned}
  a_n^{ j}  &=  q a_n^{j-1} + (1-q^j) a_{n-1}^{ j-1}-  q   a_{n-1}^{j-2}    \\
&=  \sum_{k=1}^{j-1}  q^{j-k+1} B_{j-1}^k(q) a_{n-k}  + (1-q^j)
(\sum_{k=1}^{j-1}  q^{j-k} B_{j-1}^k(q) a_{n-1-k}  ) \\
&\qquad \qquad   -\sum_{k=1}^{j-2}  q^{j-k} B_{j-2}^k(q) a_{n-1-k}   \\
&=   \sum_{k=1}^{j-1}  q^{j-k+1} B_{j-1}^k(q) a_{n-k}  + (1-q^j)
(\sum_{k=2}^{j}  q^{j-k+1} B_{j-1}^{k-1}(q) a_{n-k}  ) \\
&\qquad \qquad   -\sum_{k=2}^{j-1}  q^{j-k+1} B_{j-2}^{k-1}(q) a_{n-k}   \\
&=  q^j  B_{j-1}^1(q) a_{n-1}  + \sum_{k=2}^{j-1}  q^{j-k+1}( B_{j-1}^k(q)\!+(1-q^j)  B_{j-1}^{k-1}(q) \!-
 B_{j-2}^{k-1}(q)   )  a_{n-k}  \\
&\qquad \qquad  +   q  (1-q^j)  B_{j-1}^{j-1}(q)  a_{n-j}   
\end{aligned}    
$$
so (\ref{expressionofanj}) holds for  $j$. 
\end{proof}

In what follows we shorten $B_j^k (q)$ to $B_j^k$. 

\subsection{Recurrence relation for the Poincar\'e polynomial}\label{subsection1dot2}
 Now (\ref{expressionofanj})  leads  to:   
$$\begin{aligned}
a_n&= 1 + \sum_{j=1}^n  \sum_{k=1}^j  q^{j-k+1} B_{j}^k a_{n-k}   
=  1 +    \sum_{k=1}^n  \left[\sum_{j=k}^n    q^{j-k+1} B_{j}^k   \right] a_{n-k} . 
\end{aligned}
$$
This is a recurrence relation that might allow   to compute $a_n$ from $a_{n-1}, \dots, a_0$. But we can do better. From the above we have, for $n\ge 1$:  
$$
  a_n^{ n}  = \sum_{k=1}^n    q^{n-k+1} B_{n}^k  a_{n-k}. 
$$
We can also compute directly $a_n^n$:  if $j_p=n$, then $p=1$ and 
$1\le    i_p \le n$ so that 
$$
a_n^n  =   q + q^2 + \cdots + q^n =  q \frac{1-q^n}{1-q}.  
$$
We get the recurrence relation:
\begin{equation}\label{recurrenceforan}
  q^{n} a_{n-1}  =   q  + q^2 + \cdots + q^n  -  \sum_{k=2}^n    q^{n-k+1} B_{n}^k   a_{n-k}  \qquad (n\ge 1). 
\end{equation} 

We postpone to the last section the study of this recurrence   relation. In the next section we compute  the polynomials $B_j^k$,  actually the 
related polynomials $b_j^k$.

\subsection{Value at $1$ and Catalan number} 
The family of values  $B_j^k(1)$ is uniquely defined by 
 Proposition  \ref{propanj}   at $q=1$,    with recurrence relation 
$$B_{j}^k(1)   =  B_{j-1}^k(1)-
 B_{j-2}^{k-1}(1)    \quad (2 \le k \le j-1)    .$$  
We check that  
$B_j^k(1)=   (-1)^{k-1}  \binom{j-k}{k-1}$   
and  write 
(\ref{recurrenceforan}) at $q=1$, replacing the various  $a_k(1)$ by Catalan numbers. We get:   
$$   C_n  =  n  -  \sum_{k=2}^n       (-1)^{k-1}  \binom{n-k}{k-1}   C_{n-k+1}  \qquad (n\ge 1). $$
Hence (\ref{recurrenceforan}) can be seen as a $q$-analog of the relation above and the polynomials $B_j^k$ can be viewed as $q$-analogs of the  coefficients $(-1)^{k-1}  \binom{j-k}{k-1}$.

\section{A family of polynomials}  

\subsection{First step}  
Our recurrence relation (\ref{recurrenceBkj})  for the polynomials $B_j^k$ translates into 
\begin{equation}\label{recurrencebkj}
 b_{j}^k   =  b_{j-1}^{k-1}+(1-q^j) \   b_{j-1}^{k}-
 b_{j-2}^{k-1}   \quad (2 \le k \le j-1)      
\end{equation}
for the polynomials $b_j^k= B_j^{j-k+1}$ and the initial conditions in 
Proposition  \ref{propanj}  become:
$$b_{j}^j=1 \text{ for } j\ge 1,  \qquad 
b_{j}^1=   (1-q^2) \cdots (1-q^j)   \text{ for } j\ge 2  .$$  

From (\ref{recurrencebkj})  we have 
$b_{j}^{j-1}  =  b_{j-1}^{j-2}-q^j  $, giving by iteration:   
\begin{equation}\label{bjmoins1j}
  b_{j}^{j-1}   = 1-q^2 - \cdots - q^j =  1- \psi(j-1) 
\end{equation}
where we define  $\psi(k)= q^2 + \cdots + q^{k+1}$ for $k\ge 1$.

Remembering that $  b_j^1=  (1-q^2)  \cdots   (1-q^j)$, and with the help of some computations for small $j$ and $k$, we are led to consider the family $b_j^k$ for fixed $j$ and variable $k$ as a family of polynomials 
intermediate between $ 1-q^2 - \cdots - q^j
$  and    $   (1-q^2)  \cdots   (1-q^j)$. They can be expressed   in terms of the polynomials 
$\Sigma\Pi(a,b) [l_1, l_2, \cdots, l_u]$   
that we define in the next subsection.

\subsection{The polynomials $\Pi$ and $\Sigma\Pi$}\label{notationPi}

We define  $\Pi(u,v)=1$ for $  u > v$  and 
$$ \Pi(u,v)= (1-q^u) (1-q^{u+1}) \cdots (1-q^v) = \prod_{t=u}^v  (1-q^t)  \text{ for } 1\le u \le v  
. $$ 

Starting with a product $\Pi(u,v)$, we make the following transformation attached to $s\ge 2$ and $i$, $u\le i \le v-s+1$: we replace the subproduct 
$(1-q^i)\cdots (1-q^{i+s-1})$  by the monomial $-q^i$. We denote the resulting polynomial by $\Pi(u,v)[i(s)]$: $s$ stands for the number of consecutive terms $(1-q^x)$ suppressed, which we will refer to as the {\it length of the gap}, and $i$ means that the term of lowest degree  suppressed in this gap is $(1-q^i)$, replaced by $-q^i$. For instance: 
$$\Pi(4,10)[5(3)] =  (1-q^4) (-q^5) (1-q^8)(1-q^9)(1-q^{10}).$$  
Now let $s_1, \cdots, s_p$  be integers at least equal to $2$ and let $i_1, \cdots, i_p$ satisfy  
$$  u \le i_1, \     i_t +s_t\le  i_{t+1} \text{ for }
1\le t \le p-1,   \     i_p+s_{p} -1 \le v.  
 $$
The notation 
$$\Pi(u,v)[ i_1(s_1), i_2(s_2), \cdots, i_p(s_p)] 
 $$ 
is almost self-explanatory: we cut a gap of length $s_t$ at $i_t$ and replace it by $-q^{i_t}$.  
The conditions specify that we can have two gaps next to each other but not overlapping.  For instance: 
$$\Pi(4,10)[5(3),8(2)] =  (1-q^4) (-q^5 )(-q^8)(1-q^{10}),   
$$
$$
\Pi(4,10)[5(2),8(3)] =  (1-q^4) (-q^5 )(1-q^7)(- q^8).$$ 

\medskip   

  We write, for $a\le b$ and $l\ge 2$:
$$
\Sigma\Pi(a,b) [l] =  \sum_{i=a}^{b-l+1}  \Pi(a,b) [i(l)]  
$$
and observe that it is the maximal meaningful sum of terms $  \Pi(a,b) [i(l)]  $. Following this observation we notice that, in a term with  gaps at $i_1, \dots, i_u$, we may accept that the gaps 
neighbour each other, but not overlap each other. So if we are to write the maximal meaningful 
sum of such terms,  the relevant information is the  sequence of the lengths of the gaps, 
say $(l_1, l_2, \cdots, l_u)$.  
\begin{definition}\label{SigmaPi}  Let $l_1$, $l_2$, ..., $l_u$ be integers at least equal to $2$. 
We write 
$$\begin{aligned}
\Sigma\Pi(a,b) [l_1, l_2, \cdots, l_u] &=  \sum_{(i_1, i_2, \dots, i_u)\in I} \Pi(a,b) [i_1(l_1), i_2(l_2), \cdots, i_u(l_u)]   \qquad  \text{  where  }    \\
I  =\{ (i_1, i_2, \cdots, i_u) \ /  \  &a\le i_1,   i_t + l_t \le i_{t+1} \text{ for } 1\le t < u, i_u+l_u-1\le b    \}   .   
\end{aligned}$$
We make the usual convention that the value is $0$ if $I$ is empty, which happens 
if and only if  $a + l_1+ \cdots + l_u -1 > b$. 
\end{definition}
\begin{remark}\label{limitSigmaPi}
If $a + l_1+ \cdots + l_u -1 = b$ i.e.  $ l_1+ \cdots + l_u =  b-a+1$, the set $I$ has only one element and 
$$ 
\Sigma\Pi(a,b) [l_1, l_2, \cdots, l_u] = (-1)^u  q^{ua + (u-1)l_1 +  \cdots + l_{u-1}}. 
$$
\end{remark}

\subsection{The polynomials $b_j^k$}

We are ready to write a formula for $b_j^k$.    

\begin{theorem}\label{polbkj}
Let $\psi(k)= q^2 + \cdots + q^{k+1}$ for $k\ge 1$  and let  
   $(b_j^k)_{1\le k \le j}$  be the  unique family of polynomials in the variable $q$ satisfying
the initial conditions 
$$b_{j}^j=1 \text{ for } j\ge 1,  \qquad 
b_{j}^1=   (1-q^2) \cdots (1-q^j)   \text{ for } j\ge 2  $$  
and   recurrence relation (\ref{recurrencebkj}): 
$$
 b_{j}^k   =  b_{j-1}^{k-1}+(1-q^j) \   b_{j-1}^{k}-
 b_{j-2}^{k-1}    \quad (2 \le k \le j-1)      . 
$$
Then $b_j^k$ is given, for $1 \le k \le j-1$, by the following formula: 
\begin{equation}\label{formulabjk} 
\begin{aligned}
b_j^k   &=   
  \sum_{t=1}^{k} (1-\psi(t) )   \  b(j,k,t)  
\end{aligned}
\end{equation}
involving   the polynomials $b(j,k,t)$ defined   for $t=k$ by  
$$b(j,k,k)=    \Pi(k+2,j)   $$ and  for  $1 \le t < k \le j-1$ by 
$$ b(j,k,t)=  
  \sum_{u=1}^{\min\{j-k-1,k-t\}}  
 \sum_{ 
d_1 + \cdots + d_u = k-t    }  
 \Sigma\Pi (t+2,j)[d_1+1, \cdots, d_u+1]    
$$
where the indices $d_1, \cdots, d_u$ are positive integers. 
\end{theorem}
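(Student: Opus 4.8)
The plan is to prove the formula \eqref{formulabjk} by induction on $j$, using the recurrence \eqref{recurrencebkj} as the engine, exactly as was done for \eqref{expressionofanj}. The base cases are essentially the boundary data already at hand: for $k=j-1$ the formula must reproduce \eqref{bjmoins1j}, i.e.\ $b_j^{j-1}=1-\psi(j-1)$, and for $k=1$ it must reproduce $b_j^1=\Pi(2,j)=(1-q^2)\cdots(1-q^j)$; one checks these directly from the definitions of $b(j,k,t)$, $\Pi$ and $\Sigma\Pi$. (The $k=1$ case is where the nested sums collapse to a single term and the ``maximal meaningful sum'' of Remark~\ref{limitSigmaPi} is relevant.) With these in place, for a fixed $j$ and $2\le k\le j-1$ we feed the formula for $b_{j-1}^{k-1}$, $b_{j-1}^k$ and $b_{j-2}^{k-1}$ into the right-hand side of \eqref{recurrencebkj} and must show the sum reorganizes into the claimed expression for $b_j^k$.

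First I would isolate the $(1-\psi(t))$-layers. Because \eqref{formulabjk} writes $b_j^k$ as $\sum_{t=1}^k (1-\psi(t))\,b(j,k,t)$, and the recurrence is linear, it suffices to verify, for each $t$ with $1\le t\le k$, the identity
\begin{equation}\label{layeridentity}
b(j,k,t) \;=\; b(j-1,k-1,t) \;+\; (1-q^j)\,b(j-1,k,t) \;-\; b(j-2,k-1,t),
\end{equation}
with the convention $b(m,\ell,t)=0$ whenever $\ell<t$ or the range is empty, and also to account for the topmost layer $t=k$ separately since $b_{j-1}^{k-1}$ only has layers up to $t=k-1$. Concretely: the layer $t=k$ of $b_j^k$ is $(1-\psi(k))\Pi(k+2,j)$, which should match $(1-q^j)$ times the $t=k$ layer of $b_{j-1}^k$, namely $(1-q^j)(1-\psi(k))\Pi(k+2,j-1)=(1-\psi(k))\Pi(k+2,j)$ — fine — plus the $t=k$ contribution of $b_{j-1}^{k-1}$, which is zero. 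So the top layer is immediate, and the work is \eqref{layeridentity} for $t<k$.

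For \eqref{layeridentity} with $t<k$, substitute the definition
$$b(j,k,t)=\sum_{u=1}^{\min\{j-k-1,\,k-t\}}\ \sum_{d_1+\cdots+d_u=k-t}\ \Sigma\Pi(t+2,j)[d_1+1,\dots,d_u+1].$$
The three terms on the right of \eqref{layeridentity} involve $\Sigma\Pi$ over the intervals $(t+2,j-1)$ (twice, with different compositions $k-t$ vs.\ $k-1-t$) and $(t+2,j-1)$ again. The key combinatorial lemma I would extract and prove is a one-step ``peeling'' identity for $\Sigma\Pi$: expanding $\Sigma\Pi(a,b)[\,\ell_1,\dots,\ell_u\,]$ according to whether the last factor $(1-q^b)$ is intact, is the right end of a gap, or is the start of nothing (i.e.\ $b$ does not lie under any gap) yields a recursion relating $\Sigma\Pi(a,b)[\dots]$ to $\Sigma\Pi(a,b-1)[\dots]$ terms with one shorter gap-list or one shorter last gap. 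Summing this peeling identity over all compositions $d_1+\cdots+d_u=k-t$ and over $u$ is precisely what turns the right-hand side of \eqref{layeridentity} into the left-hand side: the ``intact $(1-q^j)$'' branch gives $(1-q^j)b(j-1,k,t)$, the ``$j$ is the right end of a gap of length $d_u+1\ge 2$'' branch produces, after re-indexing $k-t\mapsto (k-1)-t$ by absorbing one unit of $d_u$, the term $b(j-1,k-1,t)$, and the ``$-q^j$ monomial that came from a length-$2$ gap ending at $j$'' together with the bookkeeping of the $\min$ in the upper limit produces the correction $-b(j-2,k-1,t)$. Tracking the upper summation bound $\min\{j-k-1,k-t\}$ through this — making sure the newly created or destroyed terms stay within range, and that the degenerate cases (a gap forced to cover the whole interval, as in Remark~\ref{limitSigmaPi}) are consistent — is the delicate point.

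The main obstacle I anticipate is exactly this bookkeeping in the peeling step: the definition of $\Sigma\Pi(a,b)[\ell_1,\dots,\ell_u]$ builds in the constraint that gaps may touch but not overlap and must fit inside $[a,b]$, so when $b$ drops to $b-1$ or a gap length changes, several boundary compositions appear or disappear, and the interaction with the constraint $u\le j-k-1$ (equivalently $u\le (b-a+1)-(k-t)-1$ after the substitution $a=t+2$, $b=j$) must be checked so that nothing is double-counted and no term is dropped. Once the peeling lemma is stated cleanly and proved by expanding from the right end, the rest of the argument is a finite rearrangement of sums, and the induction closes. I would therefore present the peeling lemma for $\Sigma\Pi$ as a separate lemma, prove \eqref{layeridentity} from it, and then assemble \eqref{formulabjk} by the induction just described, citing Proposition~\ref{propanj} for the existence and uniqueness of the family $(b_j^k)$.
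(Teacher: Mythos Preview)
Your plan is essentially the paper's own proof: verify the formula at the boundary $k=1$, $k=j-1$, then reduce the induction step to the layer identity $\beta(j,k,t):=b(j-1,k-1,t)+(1-q^j)\,b(j-1,k,t)-b(j-2,k-1,t)=b(j,k,t)$, and prove that identity by ``peeling'' $\Sigma\Pi(t+2,\cdot)[\dots]$ at the right endpoint. The paper also singles out $t=k$ and $t=k-1$ before treating $t\le k-2$.

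One caveat on the peeling step: your attribution of branches to the three terms is too optimistic. Peeling $b(j,k,t)$ at $j$ does give $(1-q^j)\,b(j-1,k,t)$ as the ``intact'' part, but the ``$j$ lies in the last gap'' part does \emph{not} directly yield $b(j-1,k-1,t)-b(j-2,k-1,t)$. When $d_u\ge 2$ you only recover those terms of $b(j-1,k-1,t)$ whose last gap ends exactly at $j-1$ (an $X_{j-1}$-sum, not an $I_{j-1}$-sum), and when $d_u=1$ the contribution is $-q^{j-1}$ times a $\Sigma\Pi(t+2,j-2)$ sum with $u-1$ gaps, not $-b(j-2,k-1,t)$ (note also that a length-$2$ gap ending at $j$ contributes $-q^{j-1}$, not $-q^j$). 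The paper handles this by going the other direction: it expands $(1-q^j)\,b(j-1,k,t)$ via the peeling relation, then applies the same relation once more (in reverse) to combine the result with $b(j-1,k-1,t)$ and $-b(j-2,k-1,t)$; the leftover $q^{j-1}$-terms telescope in $u$ and cancel. So your ``separate peeling lemma'' is exactly the right tool, but expect to use it twice and to see a telescoping cancellation rather than a clean three-branch split. Also, the third term $b(j-2,k-1,t)$ lives over $(t+2,j-2)$, not $(t+2,j-1)$ as you wrote.
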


Before proceeding with the proof, we remark that the upper bound in $u$ in the sum defining 
$b(j,k,t)$ may be increased  whenever convenient. Indeed, for $u > k-t$, the sum over $(d_1, \cdots, d_u)$ such that $d_1 + \cdots + d_u=k-t$ is equal to $0$ since the summation set is empty, whereas 
for $u > j-k-1$, the sum of the lengths of the gaps, equal to $u+k-t$, is greater than the available length 
$j-t-1$, so that the corresponding term $\Sigma\Pi$ is equal to $0$ by convention. 

\begin{proof} The formula holds for $k=1$ and  $k=j-1$ (recall (\ref{bjmoins1j})).   The family $(b_j^k)$ is unique by Proposition  \ref{propanj}, and following the proof of this Proposition, it is enough to prove that, 
assuming $ b_{j-1}^{k-1}$,  $ b_{j-1}^{k}$ and $
 b_{j-2}^{k-1}$ are given by (\ref{formulabjk}), then relation  (\ref{recurrencebkj}) implies that 
$b_j^k$ is also given by  (\ref{formulabjk}). We proceed,  taking $k$ such that   $2 \le k \le j-2$.   

We see $b_{j-1}^{k-1}$, $  b_{j-1}^{k}$ and $
 b_{j-2}^{k-1} $ as sums of $k$ or $k-1$ terms and write accordingly $b_j^k$ as a sum of $k$ terms, namely 
$$\begin{aligned}
b_j^k   
&=    \sum_{t=1}^{k} (1-\psi(t) ) \beta(j,k,t)  
\end{aligned}
$$
where we let 
$$ \beta(j,k,t)=   b(j-1,k-1,t) + (1-q^j)  b(j-1,k,t)  - b(j-2,k-1,t)$$  
and agree on $b(x,y,z)= 0$ if $z>y$.  

We first note that 
$$ \beta(j,k,k)=      (1-q^j)  \Pi(k+2,j-1) =    \Pi(k+2,j) = b(j,k,k).$$
We look next at the terms   in $t=k-1$.  
$$ \begin{aligned}
\beta(j,k,k-1)&=     \Pi(k+1,j-1)  -   \Pi(k+1,j-2)   \\&  
 +  (1-q^j)     \sum_{u=1}^{\min\{j-k-2,1\}}  
 \sum_{ 
d_1 + \cdots + d_u = 1   }  
 \Sigma\Pi (k+1,j-1)[d_1+1, \cdots, d_u+1]  
  .   
\end{aligned}
$$
  If $k=j-2$ the middle term is $0$ and we have  
$$     \Pi(j-1,j-1)  -     \Pi(j-1,j-2)=  -q^{j-1}  = \Sigma\Pi (j-1,j)[2 ] = b(j,j-2,j-3)  . 
$$ 
  If $k<j-2$ we claim that 
$$ \begin{aligned}     
&  \Pi(k+1,j-1)  
 +  (1-q^j)      
 \Sigma\Pi (k+1,j-1)[2 ] 
-    \Pi(k+1,j-2) = \Sigma\Pi (k+1,j)[2 ]      .   
\end{aligned}
$$
Indeed:  $
\Sigma\Pi(k+1,j) [2] =  \sum_{i=k+1}^{j-1}  \Pi(k+1,j) [i(2)]  
$.  
In this sum, the terms for $i<j-1 $ end with a $(1-q^j)  $ and their sum is equal to $ (1-q^j)      
 \Sigma\Pi (k+1,j-1)[2 ] $.   The last term, for $i=j-1$, is equal to 
$-q^{j-1}   \Pi(k+1,j-2)$ which is the sum of the first and third terms above, q.e.d. We get 
$$\beta(j,k,k-1) = b(j,k,k-1).$$ 

For $t < k-1$ we don't have such an equality, nonetheless we compute $\beta(j,k,t)$. We use the remark preceeding the proof to simplify the upper bound in $u$. 
\begin{equation}\label{beta1}    \begin{aligned}     
   \beta(j,k,t)  =
&     \sum_{u=1}^{k-t}  
 \sum_{ 
d_1 + \cdots + d_u = k-1-t    }  
 \Sigma\Pi (t+2,j-1)[d_1+1, \cdots, d_u+1]   \\
+(1-q^j)   & \sum_{u=1}^{k-t}  
 \sum_{ 
d_1 + \cdots + d_u = k-t    }  
 \Sigma\Pi (t+2,j-1)[d_1+1, \cdots, d_u+1]   \\
 -   & \sum_{u=1}^{k-t}  
 \sum_{ 
d_1 + \cdots + d_u = k-1-t    }  
 \Sigma\Pi (t+2,j-2)[d_1+1, \cdots, d_u+1] .
\end{aligned}
\end{equation}

Let $d=(d_1, \cdots, d_u)$ and, following  Definition \ref{SigmaPi},  write $I_s(d)$ for the summation set of $\Sigma\Pi (t+2,s)[d_1+1, \cdots, d_u+1] $. We have: 
$$
I_s(d)  =\{ (i_1, i_2, \cdots, i_u) \ /  \  t+2\le i_1,   i_t + d_t +1\le i_{t+1} \text{ for } 1\le t < u, i_u+d_u\le s    \}   .  
$$
Hence 
$I_{s-1}(d)$ is contained in $I_s(d)$, with complement 
$$
X_s(d) =\{ (i_1, i_2, \cdots, i_u) \ /  \  t+2\le i_1,   i_t + d_t +1\le i_{t+1} \text{ for } 1\le t < u, i_u+d_u = s     \}  .     
$$ 
We can write
\begin{equation}\label{equationXs}     \begin{aligned}   
 \Sigma\Pi (t+2,s)[d_1+1, \cdots, &d_u+1]  =  \\    \  (1-q^s)   \   
 &\Sigma\Pi (t+2,s-1)[d_1+1, \cdots, d_u+1]  
\\    +  
  \sum_{(i_1, i_2, \dots, i_u)\in X_s(d)}  & \Pi(t+2,s) [i_1(d_1+1),  \cdots, i_u(d_u+1)] .   
\end{aligned}  \end{equation}
We analyse the sum over $X_s(d)$ according to the value of $d_u$.
\begin{itemize}
\item  If $d_u=1$, we let 
$d^-=(d_1, \cdots, d_{u-1})$. We have: 
$$  \begin{aligned}  
 \sum_{(i_1, i_2, \dots, i_u)\in X_s(d)}  & \Pi(t+2,s) [i_1(d_1+1),   \cdots, i_u(d_u+1)] \\
= &-q^{s-1}   \sum_{(i_1, i_2, \dots, i_{u-1})\in I_{s-2}(d^-)}   \Pi(t+2,s-2) [i_1(d_1+1),   \cdots, i_{u-1}(d_{u-1}+1)] \\
=   &-q^{s-1}   \     \Sigma\Pi(t+2,s-2) [d_1+1,   \cdots, d_{u-1}+1].   
\end{aligned} $$
This equality holds even for $u=1$ if we consider this last expression as meaning $-q^{s-1}   \  \Pi(t+2,s-2)$ if $u=1$.   

\item 
If $d_u > 1$, we let  $d'=(d_1, \cdots, d_{u-1}, d_u-1)$. The condition $(i_1, i_2, \dots, i_u)\in X_{s}(d)$  is equivalent to the condition $(i_1, i_2, \dots, i_u)\in X_{s-1}(d')$. We get
$$  \begin{aligned}  
 \sum_{(i_1, i_2, \dots, i_u)\in X_s(d)}  & \Pi(t+2,s) [i_1(d_1+1),   \cdots, i_u(d_u+1)] \\
= & \sum_{(i_1, i_2, \dots, i_u)\in X_{s-1}(d')}  \Pi(t+2,s-1) [i_1(d'_1+1),   \cdots, i_u(d'_u+1)].   
\end{aligned} $$

\end{itemize}

We sum up (\ref{equationXs})   over $(d_1, \cdots, d_u)$, for $s=j$: 
\begin{equation}\label{sumupXs}
 \begin{aligned}   
   (1-q^j)   &
 \sum_{ 
d_1 + \cdots + d_u = k-t    }  
 \Sigma\Pi (t+2,j-1)[d_1+1, \cdots, d_u+1]    
 \\
= &  
 \sum_{ 
d_1 + \cdots + d_u = k-t    }  
 \Sigma\Pi (t+2,j)[d_1+1, \cdots, d_u+1]    \\
&-    \sum_{ \begin{smallmatrix}
d_1 + \cdots + d_u = k-t   \cr d_u=1 \end{smallmatrix} }   
 \sum_{(i_1, i_2, \dots, i_u)\in X_j(d)}  \Pi(t+2,j) [i_1(d_1+1),  \cdots, i_u(d_u+1)]
 \\
&-    \sum_{ \begin{smallmatrix}
d_1 + \cdots + d_u = k-t   \cr d_u>1 \end{smallmatrix} }   
 \sum_{(i_1, i_2, \dots, i_u)\in X_j(d)}  \Pi(t+2,j) [i_1(d_1+1),  \cdots, i_u(d_u+1)] 
 \\
=   &
 \sum_{ 
d_1 + \cdots + d_u = k-t    }  
 \Sigma\Pi (t+2,j)[d_1+1, \cdots, d_u+1]    \\
&+    q^{j-1}     \sum_{ 
d_1 + \cdots + d_{u-1} = k-t -1   }   
 \     \Sigma\Pi(t+2,j-2) [d_1+1,   \cdots, d_{u-1}+1]
 \\
-    &\sum_{  
d_1 + \cdots + d_u = k-t-1  }   
 \sum_{(i_1, i_2, \dots, i_u)\in X_{j-1}(d)}  \Pi(t+2,j-1) [i_1(d_1+1),  \cdots, i_u(d_u+1)] 
    .   
\end{aligned}
\end{equation}
In these equalities we have $u\le k-t$, otherwise all sums are $0$, and if $u=k-t$ the last line is $0$. For    $u\le k-t$, the term in $u$ in $\beta(j,k,t)$ is then  equal to:   
\begin{equation}\label{sumupXs2}
 \begin{aligned}   
 &  \sum_{ 
d_1 + \cdots + d_u = k-1-t    }  \left[  
 \Sigma\Pi (t+2,j-1)[d_1+1, \cdots, d_u+1]   \right.   \\  & \left. 
\qquad \qquad   \qquad  \qquad \qquad   \qquad   
- \Sigma\Pi (t+2,j-2)[d_1+1, \cdots, d_u+1] \right]  \\
&   +    
 \sum_{ 
d_1 + \cdots + d_u = k-t    }  
 \Sigma\Pi (t+2,j)[d_1+1, \cdots, d_u+1]    \\
&+    q^{j-1}     \sum_{ 
d_1 + \cdots + d_{u-1} = k-t -1   }   
 \     \Sigma\Pi(t+2,j-2) [d_1+1,   \cdots, d_{u-1}+1]
 \\
-    &\sum_{  
d_1 + \cdots + d_u = k-t-1  }   
 \sum_{(i_1, i_2, \dots, i_u)\in X_{j-1}(d)}  \Pi(t+2,j-1) [i_1(d_1+1),  \cdots, i_u(d_u+1)] 
    .   
\end{aligned}
\end{equation}
Using (\ref{equationXs}) again we see that the first, second  and last line add up to 
$$  - q^{j-1}   
\sum_{  
d_1 + \cdots + d_u = k-t-1  } \Sigma\Pi (t+2,j-2)[d_1+1, \cdots, d_u+1]
$$
We can now  compute $\beta(j,k,t)$.
$$  \begin{aligned}     
   \beta(j,k,t)  =
&     \sum_{u=1}^{ k-t}  \left[      - q^{j-1}   
\sum_{  
d_1 + \cdots + d_u = k-t-1  }   \Sigma\Pi (t+2,j-2)[d_1+1, \cdots, d_u+1]  \right. 
\\ 
& \qquad  +   
 \sum_{ 
d_1 + \cdots + d_u = k-t    }  
 \Sigma\Pi (t+2,j)[d_1+1, \cdots, d_u+1]    \\
&   \qquad  +  \left.    q^{j-1}     \sum_{ 
d_1 + \cdots + d_{u-1} = k-t -1   }   
 \     \Sigma\Pi(t+2,j-2) [d_1+1,   \cdots, d_{u-1}+1]    \right] 
  \\
=&  \     b(j,k,t) 
\\
&+  q^{j-1}   \left[    -   \sum_{u=1}^{ k-t-1}    \sum_{  
d_1 + \cdots + d_u = k-t-1  }   \Sigma\Pi (t+2,j-2)[d_1+1, \cdots, d_u+1]  \right. 
\\   &   \qquad  \qquad +    \left.     \sum_{u=0}^{ k-t-1}       \sum_{ 
d_1 + \cdots + d_{u} = k-t -1   }   
 \     \Sigma\Pi(t+2,j-2) [d_1+1,   \cdots, d_{u}+1]    \right]    .
\end{aligned}
$$
The two last lines cancel one another, except for  ``$u=0$'', for which we use our convention below 
 (\ref{equationXs}): those terms come from  terms with  $u=1$ and  $d_u=d_1= k-t=1 $ which only happens if $t=k-1$, a case treated separately before. Here we have $t\le k-2$, 
  giving finally:  
$$   \begin{aligned}     
   \beta(j,k,t)  
=&  \     b(j,k,t) 
     .
\end{aligned}
$$

We obtain:  
$$\begin{aligned}
b_j^k   
&=    \sum_{t=1}^{k} (1-\psi(t) ) \beta(j,k,t)   \\
&=  (1-\psi(k) ) b(j,k,k )  +  (1-\psi(k-1) ) b(j,k,k-1 )  \\
&    \qquad      +  \sum_{t=1}^{k-2} (1-\psi(t) )
 b(j,k,t) 
\end{aligned}
$$
 which is exactly (\ref{formulabjk} ).  
\end{proof}

\medskip
\begin{remark}\label{recurrenceforeacht} 
We have actually  proved that the family of polynomials  $b(j,k,t)$ defined in the Theorem
for $1\le t \le k \le j-1$  and   
extended by $b(x,y,z)= 0$ if  $x>y$ and $z>y$,  
satisfies the following recurrence relation, for $2 \le k \le j-2$:    
$$
 b(j,k,t)=   b(j-1,k-1,t) + (1-q^j)  \  b(j-1,k,t)  - b(j-2,k-1,t)       .   
 $$

\end{remark}  


\section{Conclusion and further questions}

\subsection{A general formula}

We established the following in the course of our computations. We do not know if it is a known formula, in any case we couldn't find a reference for it. We point out that in the Proposition below, we make no assumption on the double sequence $b_n^i$, in other words, in this subsection and the next, the notation $b_n^i$ does not  refer to the polynomials that we have computed  in the previous  section, nor does $\psi$ refer to the function used before. 

\begin{proposition}\label{generalprop2}
Let $A$ be a commutative ring. Let 
 $n \mapsto \psi(n)$, $n\ge 1$, be a  function from $\mathbb N$ to $A$ and let  
$(b_n^i)_{ n \ge 2,   1 \le i \le n-1 }$  be a  double sequence of elements in $A$. 
Let  $(u_n)_{n \ge 1}$   be the sequence of elements in $A$ defined by 
$u_1= \psi(1)$ and the following recurrence relation: 
\begin{equation}\label{newnotationrecurrence}
u_n =  \psi(n) - \sum_{i=1}^{n-1}  \    b_n^i \    u_i  \qquad    (n \ge 2).    
\end{equation}
 
  Then we have, for $n\ge 1$:  

\begin{equation}\label{generalan2newnotation}
  u_n=  \psi(n) + \   \sum_{i=1}^{n-1} \psi(i) \ \sum_{s=0}^{n-1-i}      (-1)^{s+1}     \  
\sum_{n= v_0  >   \cdots > v_{s} > v_{s+1}=i  }    \    \prod_{u=0}^{s} 
b_{v_u}^{v_{u+1}} .    
\end{equation} 
\end{proposition}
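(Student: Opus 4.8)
The plan is to prove \eqref{generalan2newnotation} by induction on $n$, after repackaging its right-hand side as a sum over strictly decreasing chains. First I would attach to every finite strictly decreasing chain $C\colon n = v_0 > v_1 > \cdots > v_m$ of positive integers (allowing $m = 0$, i.e.\ the one-vertex chain $\{n\}$) the weight
$$ w(C) = (-1)^{m}\, \psi(v_m)\, \prod_{u=0}^{m-1} b_{v_u}^{v_{u+1}}, $$
so that $w(\{n\}) = \psi(n)$. Collecting the chains that start at $n$ according to their lower endpoint $v_m = i$ and their number of edges $m = s+1$ — the one-vertex chain giving the isolated $\psi(n)$ — shows that the right-hand side of \eqref{generalan2newnotation} is literally $\sum_C w(C)$, the sum over all chains starting at $n$. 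This step is a pure rewriting and carries no content; it merely aligns the indexing in the statement (endpoint $i$, height $s$, vertices $v_0,\dots,v_{s+1}$) with the chain picture. So it suffices to prove $u_n = \sum_{C\text{ starts at }n} w(C)$ for every $n\ge 1$.

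The induction is then short. The base case $n=1$ is immediate, the only chain starting at $1$ being $\{1\}$, of weight $\psi(1) = u_1$. For the inductive step, I would use that every chain starting at $n$ with at least one edge factors uniquely as a first edge $n > i$, with $1 \le i \le n-1$ and $i = v_1$, followed by a chain $C'$ starting at $i$; deleting that first edge lowers $m$ by one and strips off exactly the factor $(-1)\,b_n^i$ while leaving the endpoint untouched, so $w(C) = -\,b_n^i\, w(C')$ uniformly (including when $C'$ is the one-vertex chain). Summing over all such $C$ and applying the induction hypothesis to each $i<n$ gives
$$ \sum_{C\text{ starts at }n} w(C) \;=\; \psi(n) + \sum_{i=1}^{n-1} \sum_{C'\text{ starts at }i} \bigl(-b_n^i\, w(C')\bigr) \;=\; \psi(n) - \sum_{i=1}^{n-1} b_n^i\, u_i \;=\; u_n, $$
the last equality being the defining recurrence \eqref{newnotationrecurrence}. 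This closes the induction and proves the proposition.

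There is no genuinely hard step here; the proof is essentially an organised unrolling of \eqref{newnotationrecurrence} into itself, the substitution terminating at the index $i$ that contributes the factor $\psi(i)$. The only points demanding care are the sign bookkeeping in the definition of $w(C)$ — the exponent must be the number of edges, so that prepending an edge flips the sign and reproduces the minus sign in \eqref{newnotationrecurrence} — and the boundary conventions: the one-vertex chain must be included (it produces the stand-alone $\psi(n)$ and makes the factorisation identity $w(C) = -b_n^i w(C')$ hold with no exception), and the value $u_1 = \psi(1)$ must be read as the empty-sum instance of \eqref{generalan2newnotation}. An equivalent presentation avoids chains entirely and simply iterates \eqref{newnotationrecurrence}, grouping the resulting monomials by their terminating index; the chain language is only a convenient way to keep track of that expansion.
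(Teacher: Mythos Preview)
Your proof is correct and follows essentially the same approach as the paper: both argue by induction on $n$, substituting the claimed formula for each $u_i$ with $i<n$ into the defining recurrence \eqref{newnotationrecurrence} and checking that the result is the claimed formula for $u_n$. Your chain-and-weight packaging replaces the paper's explicit index manipulations (swapping sums, shifting $s\mapsto s-1$, inserting $v_0=n$) with the single observation that prepending the edge $n>i$ multiplies the weight by $-b_n^i$; as you yourself note, this is only a bookkeeping device for the same unrolling, not a different argument.
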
 

\medskip 

 \begin{proof} This holds for $n=1$. We assume it holds for any $m<n$ and prove it for $n$. 
$$\begin{aligned}
u_n&=  \psi(n) - \sum_{i=1}^{n-1}  \    b_n^i \    u_i  \\
&=  \psi(n) - \sum_{i=1}^{n-1}  \    b_n^i \    \left( \psi(i) + \   \sum_{j=1}^{i-1} \psi(j) \ \sum_{s=0}^{i-1-j}      (-1)^{s+1}     \  
\sum_{i= v_0  >   \cdots > v_{s}  > v_{s+1}= j }    \    \prod_{u=0}^{s} 
b_{v_u}^{v_{u+1}} \right)  .
\end{aligned}$$ 
We exchange the sums in $i$ and $j$; a  term $\psi(j)$ appears in the $i$-th term if and only if 
$i \ge j$, whence: 
$$\begin{aligned}
u_n&=     \psi(n) - \sum_{j=1}^{n-1}  \psi(j)  \left(  \    b_n^j   + \ 
  \sum_{i=j+1}^{n-1} b_n^i \ \sum_{s=0}^{i-1-j}      (-1)^{s+1}     \  
\sum_{i= v_0  >   \cdots > v_{s}  > v_{s+1}= j }    \    \prod_{u=0}^{s} 
b_{v_u}^{v_{u+1}}\right)   \\
&=   \psi(n) + \sum_{j=1}^{n-1}  \psi(j)  \left(  \  -  b_n^j   + \ 
  \sum_{i=j+1}^{n-1} \ \sum_{s=0}^{i-1-j}      (-1)^{s+2}     \  
\sum_{i= v_0  >   \cdots > v_{s}  > v_{s+1}= j }    \   b_n^i  \prod_{u=0}^{s} 
b_{v_u}^{v_{u+1}} \right) .
\end{aligned}$$ 
We change the index $s$ into $s-1$ with $1\le s \le i-j$, then shift the indices in the last sum to let 
in $v_0=n$:    
 $$\begin{aligned}
u_n&= 
  \psi(n) + \sum_{j=1}^{n-1}  \psi(j)  \left(  \  -  b_n^j   + \ 
  \sum_{i=j+1}^{n-1} \ \sum_{s=1}^{i-j}      (-1)^{s+1}     \  
\sum_{n=v_0>i= v_1  >   \cdots > v_{s}  > v_{s+1}= j }    \    \prod_{u=0}^{s} 
b_{v_u}^{v_{u+1}} \right) 
 \\
&=   \psi(n) + \sum_{j=1}^{n-1}  \psi(j)  \left(  \  -  b_n^j   +  \ \sum_{s=1}^{n-1-j}    (-1)^{s+1}      \ 
  \sum_{i=j+s}^{n-1}     \  
\sum_{n=v_0>i= v_1  >   \cdots > v_{s}  > v_{s+1}= j }    \    \prod_{u=0}^{s} 
b_{v_u}^{v_{u+1}} \right)     
\end{aligned}$$ 
where the coefficient of 
$\psi(j)$ is exactly 
$$\sum_{s=0}^{n-1-j}      (-1)^{s+1}     \  
\sum_{n= v_0  >   \cdots > v_{s} > v_{s+1}=j  }    \    \prod_{u=0}^{s} 
b_{v_u}^{v_{u+1}}      $$  as expected.  
\end{proof}

\subsection{Matrix interpretation}\label{matricesnewnotation}

Recurrence relation (\ref{newnotationrecurrence}) can be viewed in matrix form as: 
$$
P  U = \Psi   \qquad   \text{ i.e. }     U = P^{-1}  \Psi 
$$
with  
$$U=\left( \begin{matrix}u_1 \cr u_2 \cr \vdots  \cr u_n \cr \vdots \end{matrix}\right), \quad  
P=\left( \begin{matrix} 1 & 0 & \cdots & \cdots& \cdots   \cr 
b_2^1 & 1 &0& \cdots  & \cdots   \cr  \vdots & \ddots &\ddots  & \ddots& \cdots \cr 
b_{n}^{1} &   \cdots & b_{n}^{n-1}& 1 & \cdots \cr 
 \vdots & \vdots &\vdots  & \vdots& \vdots  \end{matrix}\right), \quad
 \Psi=\left( \begin{matrix} \psi(1) \cr \psi(2) \cr \vdots \cr \psi(n) \cr \vdots \end{matrix}\right).
$$
We also  write 
$$ P^{-1}=\left( \begin{matrix} 1 & 0 & \cdots & \cdots& \cdots   \cr 
c_2^1 & 1 &0& \cdots  & \cdots   \cr  \vdots & \ddots &\ddots  & \ddots& \cdots \cr 
c_{n}^{1} &   \cdots & c_{n}^{n-1}& 1 & \cdots \cr 
 \vdots & \vdots &\vdots  & \vdots& \vdots  \end{matrix}\right)$$
and obtain 
$$
u_n=   \psi(n) + \   \sum_{i=1}^{n-1}   c_n^i \psi(i) =  \   \sum_{i=1}^{n}   c_n^i \psi(i). 
$$
The $k$-th column $Q_k$  of $ P^{-1}$  satisfies $PQ_k= L_k  $, where $L_k$ is  the $k$-th column of the identity matrix, so by the general formula  (\ref{generalan2newnotation}) we have 
$$
(Q_k)_n =  (L_k)_n  +  \sum_{i=1}^{n-1} (L_k)_i \ \sum_{s=0}^{n-1-i}      (-1)^{s+1}     \  
\sum_{n= v_0  >   \cdots > v_{s} > v_{s+1}=i   }    \    \prod_{u=0}^{s} 
b_{v_u}^{v_{u+1}} 
$$
Indeed for $n<k$ this formula yields $0$, for $n=k$ it yields $1$,  and for $n>k$ it yields 
$$
c_n^k=   \sum_{s=0}^{n-1-k}      (-1)^{s+1}     \  
\sum_{n= v_0  >   \cdots > v_{s}  > v_{s+1}=k }    \    \prod_{u=0}^{s} 
b_{v_u}^{v_{u+1}} .
$$ 
 
In other words, formula (\ref{generalan2newnotation}) amounts to the calculation of the inverse of a triangular matrix, which is most likely  known,

\subsection{The Poincar\'e polynomial for $W^c(A_n)$}    
We now apply 
 Proposition \ref{generalprop2}   to our sequence $(a_n)_{n\ge 0}$.  
The dictionary between    (\ref{recurrenceforan}), multiplied by $q$ for convenience, and 
(\ref{newnotationrecurrence})
is: 
$$
u_n= q^{n+1} a_{n-1},  \quad  \psi(n)= q^2 + \cdots + q^{n+1}, \quad   
b_n^i = B_n^{n-i+1}.   
$$ 
We get immediately formula (\ref{alternativean}) in the following Theorem: 

\begin{theorem}\label{maintheorem}
For $1\le k  \le j$ we let $(b_j^k)_{1\le k \le j}$ be  the unique family of polynomials described in Theorem \ref{polbkj}.  
The Poincar\'e polynomial $a_n(q)$ is given, for $n \ge 1$,  by the following formula:  
\begin{equation}\label{alternativean}
\begin{aligned}
 q^{n+2} a_n=  &q^2 + \cdots + q^{n+2}   \\  & + \   \sum_{i=1}^{n} \  (q^2+ \cdots +q^{i+1})   
   \ \sum_{s=0}^{n-i}      (-1)^{s+1}   
\sum_{n+1= v_0  >   \cdots > v_{s}  > v_{s+1}=i  }  \    \prod_{u=0}^{s} 
b_{v_u}^{v_{u+1}}   .  
\end{aligned}
\end{equation}

\end{theorem}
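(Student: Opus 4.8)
The plan is to derive \eqref{alternativean} directly from Proposition \ref{generalprop2}, through the dictionary stated just above the theorem: one sets $u_n=q^{n+1}a_{n-1}$, $\psi(n)=q^2+\cdots+q^{n+1}$ and $b_n^i=B_n^{n-i+1}$, working in $A=\mathbb Z[q]$. Since $b_j^k=B_j^{j-k+1}$ by definition (Section 3), the abstract double sequence $b_n^i$ of the Proposition is literally the family $(b_j^k)$ of Theorem \ref{polbkj}, so the $b_{v_u}^{v_{u+1}}$ appearing in \eqref{generalan2newnotation} are exactly those of the statement.

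First I would verify that, under this dictionary, recurrence \eqref{newnotationrecurrence} is nothing but \eqref{recurrenceforan} multiplied by $q$. Multiplying \eqref{recurrenceforan} by $q$ gives $q^{n+1}a_{n-1}=(q^2+\cdots+q^{n+1})-\sum_{k=2}^{n} q^{\,n-k+2}B_n^k a_{n-k}$; in that sum the substitution $i=n-k+1$ turns the range $2\le k\le n$ into $1\le i\le n-1$ and the general term into $B_n^{n-i+1}\,q^{\,i+1}a_{i-1}=b_n^i\,u_i$, which is precisely $u_n=\psi(n)-\sum_{i=1}^{n-1}b_n^i u_i$. The base case matches as well: $u_1=q^2a_0=q^2=\psi(1)$ because $a_0=1$.

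Then I would apply Proposition \ref{generalprop2}, which yields formula \eqref{generalan2newnotation} for $u_n$, $n\ge1$. Substituting back $u_n=q^{n+1}a_{n-1}$ and $\psi(m)=q^2+\cdots+q^{m+1}$, and finally relabelling $n\mapsto n+1$ — so that the left-hand side becomes $q^{n+2}a_n$, the leading $\psi$-term becomes $q^2+\cdots+q^{n+2}$, and the nested chains run over $n+1=v_0>\cdots>v_s>v_{s+1}=i$ with $1\le i\le n$ and $0\le s\le n-i$ — gives exactly \eqref{alternativean}.

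I expect no real obstacle: the entire content is carried by Proposition \ref{generalprop2}, together with Proposition \ref{propanj}, which already guarantees that the $B_j^k$, hence the $b_j^k$, are well-defined polynomials with integer coefficients (the explicit closed form of Theorem \ref{polbkj} is not needed for this proof, only to render \eqref{alternativean} fully explicit). The one point deserving a little care is the index shift $n\mapsto n+1$ together with the range conventions on $s$ and $i$: one should check that for $i=n$ the inner alternating sum collapses to the single term $s=0$, so $\psi(n)$ enters with coefficient $-b_{n+1}^{\,n}$, and that every factor in the products is of the form $b_j^k$ with $k<j$ (the chains being strictly decreasing), so that the boundary value $b_j^j=1$, which falls outside the scope of Theorem \ref{polbkj}'s formula, never occurs.
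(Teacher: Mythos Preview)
Your proposal is correct and follows essentially the same approach as the paper: the paper's proof consists precisely of setting up the dictionary $u_n=q^{n+1}a_{n-1}$, $\psi(n)=q^2+\cdots+q^{n+1}$, $b_n^i=B_n^{n-i+1}$, noting that this turns \eqref{recurrenceforan} (multiplied by $q$) into \eqref{newnotationrecurrence}, and then reading off \eqref{alternativean} from Proposition~\ref{generalprop2}. Your write-up is in fact more detailed than the paper's, which simply states the dictionary and says ``we get immediately formula~\eqref{alternativean}''; your explicit verification of the index change $i=n-k+1$, of the base case $u_1=\psi(1)$, and of the shift $n\mapsto n+1$ is exactly what underlies that sentence.
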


\subsection{A slightly shorter formula}

We use the notation in subsection \ref{matricesnewnotation} and consider the first column 
$(c_i^1)_{i\ge 1}$    of the inverse matrix of   $P= (b_i^j)$ (notation extended to  $b_i^j=0$ for $j >i$). 
In the case of the Poincar\'e polynomial, 
there is actually a shortcut in the computation:
 \begin{proposition}\label{Cnnandan}  
$$
c_{n+2}^1=  -(1-q^2) u_{n} =   -(1-q^2)  q^{n+1} a_{n-1} \quad \text{ for } n \ge 1  .  
$$
\end{proposition}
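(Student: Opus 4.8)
The plan is to pin down the first column $c^1=(c^1_m)_{m\ge 1}$ of $P^{-1}$ by writing down a candidate and verifying the defining equation $Pc^1=L_1$, where $L_1$ is the first column of the identity matrix. Let $(d_m)_{m\ge 1}$ be the sequence with $d_1=1$, $d_2=-(1-q^2)$ and $d_m=-(1-q^2)\,u_{m-2}$ for $m\ge 3$. Since $P$ is lower triangular with $1$'s on the diagonal it is invertible, so $Pd=L_1$ has at most one solution; it is therefore enough to check $Pd=L_1$, for then $d=c^1$ and the claim is exactly $c^1_{n+2}=d_{n+2}=-(1-q^2)u_n=-(1-q^2)q^{n+1}a_{n-1}$ for $n\ge 1$.

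Expanding $(Pd)_m=\sum_{k=1}^{m}b_m^k d_k$ and isolating the terms $k=1,2$ (using $b_2^1=1-q^2$ and $b_m^m=1$) one sees immediately that $(Pd)_1=1$ and $(Pd)_2=0$, while for $m\ge 3$
\[
(Pd)_m = b_m^1-(1-q^2)b_m^2-(1-q^2)\,T_m ,\qquad T_m:=\sum_{j=1}^{m-2}b_m^{j+2}\,u_j .
\]
So $Pd=L_1$ amounts to the polynomial identities $(1-q^2)T_m=b_m^1-(1-q^2)b_m^2$ for all $m\ge 3$, which I will call $(\star)$.

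To prove $(\star)$ I would introduce the companion sum $R_m:=\sum_{j=1}^{m-1}b_m^{j+1}u_j$ and extract recurrences for both $R_m$ and $T_m$ from (\ref{recurrencebkj}). Starting from $\sum_{k=1}^{m}b_m^k u_k=\psi(m)$ (which is (\ref{newnotationrecurrence}) rewritten using $b_m^m=1$), one peels off the diagonal term $b_m^m=1$, substitutes $b_m^k=b_{m-1}^{k-1}+(1-q^m)b_{m-1}^k-b_{m-2}^{k-1}$ in the remaining terms (indices $2\le k\le m-1$), and regroups the three resulting sums as lower-index copies of $R$, of $T$ and of $\psi$; here the telescoping structure $\psi(m-1)-\psi(m-2)=q^m$ of $\psi$ enters. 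Together with the base values $R_1=0$, $R_2=q^2$, $T_2=0$, $T_3=q^2$ (all immediate) this yields
\[
R_m=q^m+(1-q^m)R_{m-1}\ \ (m\ge 3),\qquad T_m=R_{m-1}+(1-q^m)T_{m-1}-R_{m-2}\ \ (m\ge 4).
\]
Telescoping $R_m-1=(1-q^m)(R_{m-1}-1)$ gives the closed form $R_m=1-b_m^1$ for all $m\ge 1$; substituting it into the second recurrence gives $T_m=b_{m-2}^1-b_{m-1}^1+(1-q^m)T_{m-1}$ for $m\ge 4$.

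Now $(\star)$ follows by induction on $m\ge 3$. The base case $m=3$ is immediate from $b_3^1=(1-q^2)(1-q^3)$ and $b_3^2=1-q^2-q^3$. For the inductive step, multiply $T_m=b_{m-2}^1-b_{m-1}^1+(1-q^m)T_{m-1}$ by $1-q^2$, use the induction hypothesis $(1-q^2)T_{m-1}=b_{m-1}^1-(1-q^2)b_{m-1}^2$, and rewrite the target $b_m^1-(1-q^2)b_m^2$ by means of $b_m^1=(1-q^m)b_{m-1}^1$ and of (\ref{recurrencebkj}) in the shape $b_m^2=b_{m-1}^1+(1-q^m)b_{m-1}^2-b_{m-2}^1$; both sides collapse to $(q^2-q^m)b_{m-1}^1+(1-q^2)b_{m-2}^1-(1-q^2)(1-q^m)b_{m-1}^2$. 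This establishes $Pd=L_1$, hence $d=c^1$ and the Proposition. The one delicate point is the derivation of the recurrences for $R_m$ and $T_m$: (\ref{recurrencebkj}) holds only for $2\le k\le j-1$, so the diagonal terms $b_j^j=1$ — and, in the $R$-computation, the constant contributions absorbed into $\psi$ — must be separated out before it can be applied, and one has to watch how the shifted summation ranges make $R_m$ and $T_m$ reoccur at smaller index.
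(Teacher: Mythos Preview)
Your proof is correct and takes a genuinely different route from the paper's.

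The paper works from the $c^1$ side: starting from the defining relation $b_{n+2}^1+\sum_{i=2}^{n+1}b_{n+2}^i c_i^1+c_{n+2}^1=0$, it applies the recurrence (\ref{recurrencebkj}) twice to lower the first index of each $b_{n+2}^i$ to $b_n^{i-2}$, and then evaluates the leftover piece $\Phi$ via Lemma~\ref{lemmaPhi}, an induction in an auxiliary parameter $k$ that iteratively peels off the top index using (\ref{recCNNtechnical}) and (\ref{recurrencebkj}) until only $c_2^1=-(1-q^2)$ remains. You work from the $u$ side instead: you write down the candidate column $d$ and verify $Pd=L_1$ directly, which boils down to the single identity $(1-q^2)T_m=b_m^1-(1-q^2)b_m^2$. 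Your introduction of the companion sum $R_m$ and the observation that it telescopes to the closed form $R_m=1-b_m^1$ is the key simplification; after that, the induction for $(\star)$ is a two-line computation using only (\ref{recurrencebkj}) at $k=2$. This is cleaner and shorter than the paper's $\Phi(k)$ argument. The paper's formulation, in return, is phrased in a way (first column of $PSP^{-1}$) that makes iteration of the method more visible.

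One small remark: your verbal description of how the recurrences for $R_m$ and $T_m$ are obtained (``starting from $\sum_k b_m^k u_k=\psi(m)$, peel off the diagonal, substitute, regroup as $R$, $T$, $\psi$'') is slightly misleading as stated --- the substitution should be performed in the definitions of $R_m$ and $T_m$ themselves, and the identity $\sum_k b_m^k u_k=\psi(m)$ then serves to evaluate one of the three resulting sums (in the $R_m$ case), while in the $T_m$ case the three sums regroup as $R_{m-1}$, $T_{m-1}$, $R_{m-2}$ rather than involving $\psi$. You clearly have the right computation in mind since the stated recurrences are correct; just tighten the prose.
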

As in subsection \ref{matricesnewnotation},  
we  get a matrix form of this identity as follows: 
$$
S Q_1= S P^{-1} L_1=  -(1-q^2)   U = -(1-q^2)  P^{-1} \Psi  
$$
where $S$ is the matrix of the double shift: 
$$ S=\left( \begin{matrix} 0 & 0 & 1 &0 & \cdots   \cr 
0 & 0 &0&1& \ddots     \cr  \vdots & \ddots &\ddots  & \ddots& \ddots \cr 
\vdots &  &\ddots  &\ddots& \ddots\cr 
 \vdots &   &   &\ddots& \ddots  \end{matrix}\right),  \   SU= \left( \begin{matrix}u'_1 \cr u'_2 \cr \vdots  \cr u'_n \cr \vdots \end{matrix}\right),  u'_i=u_{i+2}. $$
This amounts to  $PSP^{-1} L_1=  -(1-q^2)  \Psi  $, that is, the first column of  $PSP^{-1} $ 
is  $-(1-q^2)  \Psi  $. This  formulation is equivalent to the Proposition  and  gives relations involving the $b_j^k$. We write the generic one: 

\begin{equation}\label{samewithcn} 
\sum_{k=1}^n  b_n^k   c_{k+2}^1 
=  -(1-q^2) \psi(n) \end{equation}
i.e.  
$$\begin{aligned}  
 (1-q^2) \psi(n)  &=  -  \sum_{k=1}^n \     b_n^k   \  
  \sum_{s=0}^{k}      (-1)^{s+1}     \  
\sum_{k+2= v_0  >   \cdots > v_{s}  > v_{s+1}=1 }    \    \prod_{u=0}^{s} 
b_{v_u}^{v_{u+1}} .
\end{aligned}
$$

We proceed  to the proof of the Proposition. 
\begin{proof}
Since the sequence $( -(1-q^2)u_n)_{n\ge 1}$ is uniquely determined by relation  
\begin{equation}\label{newnotationrecurrencemultipliedaa}
 -(1-q^2)u_n = -(1-q^2)  \psi(n) -\sum_{i=1}^{n-1}  \    b_n^i \   (-(1-q^2) u_i ) \qquad    (n \ge 2)    
\end{equation}
as in  (\ref{newnotationrecurrence}), 
together with  the first term 
$ -(1-q^2)u_1=  -(1-q^2) q^2$, 
it is enough to prove that the sequence $(c_{n+2}^1)_{n\ge 1}$ satisfies the same conditions, as announced in (\ref{samewithcn}). 

We check the first term. Indeed: 
$$\begin{aligned}  
c_3^1&=    \sum_{s=0}^{1}      (-1)^{s+1}     \  
\sum_{3= v_0  >   \cdots > v_{s}  > v_{s+1}=1 }    \    \prod_{u=0}^{s} 
b_{v_u}^{v_{u+1}}   \\
&=   - b_3^1 +  b_3^2 b_2^1 \\
&=  -(1-q^2)(1-q^3) + (1-q^2-q^3)(1-q^2)  \\
&=  -q^2(1-q^2). 
\end{aligned}$$
Now we must show  (\ref{newnotationrecurrencemultipliedaa}) for $(c_{n+2}^1)_{n\ge 1}$, namely: 
\begin{equation}\label{newnotationrecurrencemultiplied}
c_{n+2}^1 = -(1-q^2)  \psi(n) -\sum_{i=1}^{n-1}  \    b_n^i \  c_{i+2}^1 \qquad    (n \ge 2)  , 
\end{equation}
whereas $(c_{n+2}^1)_{n\ge 1}$ satisfies 
$
b_{n+2}^1 + \sum_{i=2}^{n+1} b_{n+2}^i  c_i^1 + c_{n+2}^1=0
$
i.e. 
\begin{equation}\label{initialwithcn} 
 c_{n+2}^1=-b_{n+2}^1 -  \sum_{i=2}^{n+1} b_{n+2}^i  c_i^1   . 
\end{equation}
A main difference between the two relations is that the coefficient of  $c_i^1$ in  (\ref{newnotationrecurrencemultiplied}) is  $b_n^{i-2}$, instead of $ b_{n+2}^i $  in (\ref{initialwithcn}).  
We thus use recurrence relation (\ref{recurrencebkj})   on  $b_j^k$ to drop the indices, and we use it with an extended range  of    values: 
it actually holds for $k=1$ provided we set $b_j^0=0$  for $j \ge 0$.      
We replace in (\ref{initialwithcn}   )   $b_{n+2}^i$ by   
$$  b_{n+2}^i   =(1-q^{n+2}) \   b_{n+1}^{i} +   b_{n+1}^{i-1}   -
 b_{n}^{i-1}   \quad (1 \le i \le n+1)    $$  
and gather first all terms coming from the first one above, the one with a factor   $ (1-q^{n+2})$. 
We get   the product of   $ (1-q^{n+2})$ and   
$$ -b_{n+1}^1 -  \sum_{i=2}^{n+1} b_{n+1}^i  c_i^1 = 0,   
$$
since the term  for $i=n+1$ in the sum is $-c_{n+1}^{1}$, while the rest is  equal to $c_{n+1}^{1}$. 
Hence we can replace in (\ref{initialwithcn}   )   $b_{n+2}^i$ by    $b_{n+1}^{i-1}   -
 b_{n}^{i-1}$, getting:  
\begin{equation}\label{recCNNtechnical} 
c_{n+2}^1= -  \sum_{i=2}^{n+1} (b_{n+1}^{i-1}   -
 b_{n}^{i-1}) c_i^1   . 
\end{equation}
We use once again  (\ref{recurrencebkj})    in the following form: 
$$
b_{n+1}^{i-1}   -
 b_{n}^{i-1}  =  -q^{n+1} \   b_{n}^{i-1} +   b_{n}^{i-2}   -
 b_{n-1}^{i-2}   \quad (1 \le i -1  \le n)  
$$
and replace this in the previous expression: 
\begin{equation}\label{expressionwithPhi}
\begin{aligned}  
c_{n+2}^1&= -  \sum_{i=2}^{n+1} ( -q^{n+1} \   b_{n}^{i-1} +   b_{n}^{i-2}   -
 b_{n-1}^{i-2} ) c_i^1  =   \Phi    -  \sum_{i=3}^{n+1}   b_{n}^{i-2}   c_i^1    
\end{aligned}
\end{equation} (recall $b_n^0=0$), 
letting $\Phi= -  \sum_{i=2}^{n+1} ( -q^{n+1} \   b_{n}^{i-1}     -
 b_{n-1}^{i-2} ) c_i^1  . 
$
We compute $\Phi$.

\begin{lemma}\label{lemmaPhi}
For any $k$, $0  \le k  \le  n-1$, we define  
$$  \Phi  (k) 
=   \sum_{u=k+2}^{n+1}  ( (q^{n+1}+ \cdots + q^{n-k+1}) b_{n-k}^{u-k-1} +    b_{n-k-1}^{u-k-2} )  
\    c_{u-k}^{1}.  
$$ 
Then for any $k$, $0 \le k \le  n-1$, we have $\Phi = \Phi(k)$.   Consequently 
$$\Phi= \Phi(n-1)= - (1-q^2) (q^{n+1}+ \cdots + q^{2} )   = -(1-q^2)  \psi(n)   .$$  
\end{lemma}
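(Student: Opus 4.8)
The plan is to establish the chain $\Phi=\Phi(0)=\Phi(1)=\cdots=\Phi(n-1)$ by a short induction on $k$, and then to read off $\Phi(n-1)$. The starting equality $\Phi=\Phi(0)$ costs nothing: for $k=0$ the factor $q^{n+1}+\cdots+q^{n-k+1}$ is just $q^{n+1}$ and $c_{u-k}^1=c_u^1$, so the sum defining $\Phi(0)$ is verbatim the expression for $\Phi$ written just above.

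For the inductive step I would fix $k$ with $0\le k\le n-2$, put $M=n-k\ge 2$ and $R_m=q^{m+1}+\cdots+q^{n+1}$ (so that $R_M$ is the coefficient occurring in $\Phi(k)$ and $R_{M-1}=q^M+R_M$), and reindex each sum so as to compare coefficients of the same $c_v^1$:
$$\Phi(k)=\sum_{v=2}^{M+1}\bigl(R_M\,b_M^{v-1}+b_{M-1}^{v-2}\bigr)c_v^1,\qquad \Phi(k+1)=\sum_{v=2}^{M}\bigl(R_{M-1}\,b_{M-1}^{v-1}+b_{M-2}^{v-2}\bigr)c_v^1 .$$
Then I would use (\ref{recurrencebkj}), in the extended range (with $b_j^0=0$ and $b_j^k=0$ for $k>j$), to replace $b_M^{v-1}$ inside $\Phi(k)$; its coefficient of $c_v^1$ becomes $(R_M+1)b_{M-1}^{v-2}+R_M(1-q^M)b_{M-1}^{v-1}-R_M b_{M-2}^{v-2}$. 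Subtracting $\Phi(k+1)$ term by term, the surplus term $v=M+1$ of $\Phi(k)$ contributes $(R_M+1)c_{M+1}^1$ (because $b_{M-1}^{M-1}=1$ and $b_{M-1}^{M}=b_{M-2}^{M-1}=0$), while for $2\le v\le M$, after inserting $R_{M-1}=q^M+R_M$, the coefficient of $c_v^1$ collapses to $(R_M+1)\bigl(b_{M-1}^{v-2}-q^M b_{M-1}^{v-1}-b_{M-2}^{v-2}\bigr)$, which by (\ref{recurrencebkj}) equals $(R_M+1)\bigl(b_M^{v-1}-b_{M-1}^{v-1}\bigr)$. Hence
$$\Phi(k)-\Phi(k+1)=(R_M+1)\Bigl(c_{M+1}^1+\sum_{v=2}^{M}\bigl(b_M^{v-1}-b_{M-1}^{v-1}\bigr)c_v^1\Bigr),$$
and the bracket vanishes: for $M\ge 3$ it is exactly relation (\ref{recCNNtechnical}) with $M-1$ substituted for $n$, and for $M=2$ it is the identity $c_3^1=q^2c_2^1$, which follows from the value of $c_3^1$ computed above. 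Thus $\Phi(k)=\Phi(k+1)$.

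Iterating yields $\Phi=\Phi(n-1)$, and $\Phi(n-1)$ has the single term $u=n+1$, namely $\bigl((q^{n+1}+\cdots+q^2)b_1^1+b_0^0\bigr)c_2^1$; using $b_1^1=1$, $b_0^0=0$ and $c_2^1=-b_2^1=-(1-q^2)$ (the last because $P$ is unipotent lower triangular with $(2,1)$-entry $b_2^1=1-q^2$) gives $\Phi(n-1)=-(1-q^2)(q^{n+1}+\cdots+q^2)=-(1-q^2)\psi(n)$, which is the asserted value. I expect the main obstacle to be the bookkeeping of the inductive step: keeping the two summation ranges and the boundary term $v=M+1$ straight under repeated use of (\ref{recurrencebkj}) in its extended form, and noticing that the leftover bracket is not a new identity but an instance of (\ref{recCNNtechnical}) at a shifted index, hence can be quoted rather than reproved.
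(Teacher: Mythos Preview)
Your proof is correct and follows essentially the same route as the paper's: both argue by induction on $k$, with the inductive step resting on the recurrence (\ref{recurrencebkj}) for the $b_j^k$ and the identity (\ref{recCNNtechnical}) for the $c_m^1$. The only cosmetic difference is the order of the two ingredients --- you first apply (\ref{recurrencebkj}) to $\Phi(k)$, compute $\Phi(k)-\Phi(k+1)$, factor out $R_M+1$, and then recognise (\ref{recCNNtechnical}), whereas the paper first isolates the top term $u=n+1$, expands $c_{n-k+1}^1$ via (\ref{recCNNtechnical}), and only then applies (\ref{recurrencebkj}); the computations are equivalent.
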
 
 
\begin{proof}
The statement for $k=0$ is just the definition of $\Phi$. The conclusion  
comes with $c_2^1= -b_2^1=-(1-q^2) $. It remains to  take some $k$, 
$0\le k  \le n-2$, such that  $\Phi = \Phi(k)$  and  prove that  $\Phi = \Phi(k+1)$.

Indeed, the term with $u=n+1$ in $\Phi(k)$ is, using  (\ref{recCNNtechnical}):   
$$ \begin{aligned} 
( q^{n+1} + \cdots &+ q^{n-k+1} +1)    c_{n-k+1}^{1}  =  -  
( q^{n+1}+  \cdots + q^{n-k+1} +1)   
   \sum_{s=2}^{n-k}  (b_{n-k}^{s-1}   -
 b_{n-k-1}^{s-1})  c_{s}^{1} 
\end{aligned}  $$  
so that 
$$ \begin{aligned} 
\Phi  (k) 
=  & \sum_{u=k+2}^{n}  ( (q^{n+1}+ \cdots + q^{n-k+1}) b_{n-k}^{u-k-1} +    b_{n-k-1}^{u-k-2} )  
\    c_{u-k}^{1}  \\
& -  
( q^{n+1}+  \cdots + q^{n-k+1} +1)   
   \sum_{s=2}^{n-k}  (b_{n-k}^{s-1}   -
 b_{n-k-1}^{s-1})  c_{s}^{1}   \\ 
=  & \sum_{u=k+2}^{n}  ( - b_{n-k}^{u-k-1}+    b_{n-k-1}^{u-k-2}  + (q^{n+1}+ \cdots + q^{n-k+1}+1) b_{n-k-1}^{u-k-1} )  
\    c_{u-k}^{1}  
\end{aligned}  $$
Relation (\ref{recurrencebkj})  now does the trick, since 
$$
 - b_{n-k}^{u-k-1}=- b_{n-k-1}^{u-k-2}  -  (1-q^{n-k}) \   b_{n-k-1}^{u-k-1}   + b_{n-k-2}^{u-k-2}
$$
giving $$ \begin{aligned} 
\Phi  (k)   =  & \sum_{u=k+2}^{n}  (   (q^{n+1}+ \cdots + q^{n-k}) b_{n-k-1}^{u-k-1}  + b_{n-k-2}^{u-k-2})  
\    c_{u-k}^{1}  
\end{aligned}  $$
which is exactly   $  \Phi(k+1)$.   Lemma \ref{lemmaPhi} is proved.  
\end{proof}
We obtain (\ref{newnotationrecurrencemultiplied}) by replacing 
$\Phi$ by its value $ -(1-q^2)  \psi(n) $ in (\ref{expressionwithPhi}).
\end{proof}  
  
 Proposition   \ref{Cnnandan}  may shorten computer implementations since it decreases indices by 2. 
The method of proof can be iterated but we don't pursue this here.

\subsection{Extensive formulas}\label{extensiveformula}

To conclude this work, we write again the formula for the Poincar\'e  polynomial in an  extensive form,  plugging in the value of  the   $b_j^k$. 

\begin{corollary}\label{extensiveformulacorollary}

The Poincar\'e polynomial $a_n(q)$ is given, for $n \ge 1$,  by the following formula:  
$$
\begin{aligned}
 a_n=  &  \   \frac{1}{ q^{n+2}} \  (q^2 + \cdots + q^{n+2} )    \\   +  &  \  \frac{1}{ q^{n+2}} \   \sum_{i=1}^{n} \  (q^2+ \cdots +q^{i+1})   
   \ \sum_{s=0}^{n-i}      (-1)^{s+1}   
\sum_{n+1= v_0  >   \cdots > v_{s}  > v_{s+1}=i  }  \   \\
 \prod_{u=0}^{s}   \    
&\Bigg[(1-\psi(v_{u+1}) )   \    \Pi(v_{u+1}+2,v_u)   \\
+ \sum_{t=1}^{v_{u+1}-1} &(1-\psi(t) )     \sum_{r=1}^{\min\{v_{u}-v_{u+1}-1,v_{u+1}-t\}}  
  \!  \!  \sum_{d_1 + \cdots + d_r = v_{u+1}-t}  
 \Sigma\Pi (t+2,v_{u})[d_1+1, \cdots, d_r+1]     \Bigg]  
\end{aligned}
$$
where  $\psi(k)= q^2 + \cdots + q^{k+1}$ for $k\ge 1$,   
$ \Pi(a,b) = \prod_{a\le i \le b}(1-q^i)$ for $1\le a \le b$ and 
$\Sigma\Pi (a,b)[e_1, \cdots,  e_u]  $ is the sum of all possible products obtained from $ \Pi(a,b) $ 
by removing   $e_i$ consecutive terms $(1-q^j)\cdots (1-q^{j+e_i-1})$ 
and replacing them by $(-q^j)$, for $1 \le i \le u$. 
\end{corollary}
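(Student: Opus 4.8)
The plan is to simply assemble the final formula by substituting the closed form for the polynomials $b_j^k$ from Theorem \ref{polbkj} into the expression for $a_n$ already obtained in Theorem \ref{maintheorem}. No new combinatorial identity is needed; the corollary is a bookkeeping statement that records the result of this substitution in a self-contained way. Concretely, Theorem \ref{maintheorem} gives
$$
 q^{n+2} a_n=  q^2 + \cdots + q^{n+2}   +  \sum_{i=1}^{n}   (q^2+ \cdots +q^{i+1})
   \sum_{s=0}^{n-i}      (-1)^{s+1}
\sum_{n+1= v_0  >   \cdots > v_{s}  > v_{s+1}=i  }    \prod_{u=0}^{s}
b_{v_u}^{v_{u+1}}  ,
$$
so first I would divide through by $q^{n+2}$ to isolate $a_n$ on the left, which produces the two leading lines of the displayed formula in the statement.

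Next I would expand each factor $b_{v_u}^{v_{u+1}}$ appearing in the product $\prod_{u=0}^{s} b_{v_u}^{v_{u+1}}$ using formula (\ref{formulabjk}) of Theorem \ref{polbkj}, namely $b_j^k = \sum_{t=1}^{k} (1-\psi(t)) \, b(j,k,t)$ together with the definitions $b(j,k,k)=\Pi(k+2,j)$ and, for $t<k$, $b(j,k,t)= \sum_{u=1}^{\min\{j-k-1,k-t\}} \sum_{d_1+\cdots+d_u=k-t} \Sigma\Pi(t+2,j)[d_1+1,\cdots,d_u+1]$. Here the bound variables in the theorem are $(j,k)=(v_u,v_{u+1})$; I would rename the summation index in $b(j,k,t)$ from $u$ to $r$ to avoid a clash with the product index $u$ in $\prod_{u=0}^{s}$. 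Separating the diagonal term $t=k=v_{u+1}$ (which gives $(1-\psi(v_{u+1}))\Pi(v_{u+1}+2,v_u)$) from the off-diagonal terms $t<v_{u+1}$ gives exactly the bracketed factor in the corollary; note that the $\min$ in the upper bound of $r$ becomes $\min\{v_u - v_{u+1}-1,\, v_{u+1}-t\}$ after the substitution $j\mapsto v_u$, $k\mapsto v_{u+1}$. Since (\ref{formulabjk}) is stated for $1\le k\le j-1$ and in our chain $v_{u+1}<v_u$, every factor satisfies the hypothesis, so the substitution is legitimate for all factors.

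Finally I would check that the definitions of $\Pi$ and $\Sigma\Pi$ recalled at the bottom of the corollary agree verbatim with those of subsection \ref{notationPi} (they do: $\Pi(a,b)=\prod_{a\le i\le b}(1-q^i)$ and $\Sigma\Pi(a,b)[e_1,\dots,e_u]$ is the maximal meaningful sum of products with gaps of the prescribed lengths), so that the statement is indeed closed. Strictly speaking there is essentially no obstacle here, as the proof is a one-line invocation: $a_n$ is obtained by dividing (\ref{alternativean}) by $q^{n+2}$ and then replacing every $b_{v_u}^{v_{u+1}}$ by the right-hand side of (\ref{formulabjk}). If one wanted a fully spelled-out argument the only mildly delicate point is the consistent renaming of summation variables and verifying that the index $u=s$ never forces an empty product; but this is routine. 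The proof in the paper therefore reduces to: ``Combine Theorem \ref{maintheorem} with Theorem \ref{polbkj}.''
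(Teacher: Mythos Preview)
Your proposal is correct and matches the paper's approach exactly: the paper presents this corollary as nothing more than ``plugging in the value of the $b_j^k$'' from Theorem \ref{polbkj} into formula (\ref{alternativean}) of Theorem \ref{maintheorem}, after dividing by $q^{n+2}$. Your additional remarks about renaming the inner summation index and the applicability of (\ref{formulabjk}) since $v_{u+1}<v_u$ are sound and in fact more explicit than what the paper writes.
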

 
For the sake of completeness we also write the formula that arises from Proposition 
\ref{Cnnandan}, since $ c_{n+3}^1 = - {q^{n+2}(1-q^2)} a_n$. 
\begin{corollary}\label{extensiveformulacorollary2}

The Poincar\'e polynomial $a_n(q)$ is given, for $n \ge 1$,  by the following formula:  
$$
\begin{aligned}
 a_n=  &  \   -\frac{1}{q^{n+2}(1-q^2)}   \  
\sum_{s=0}^{n+1}      (-1)^{s+1}     \  
\sum_{n+3= v_0  >   \cdots > v_{s}  > v_{s+1}=1 }    \    \prod_{u=0}^{s} b_{v_u}^{v_{u+1}}
 \\   =     &  \   -\frac{1}{q^{n+2}(1-q^2)}  \  
\sum_{s=0}^{n+1}      (-1)^{s+1}     \  
\sum_{n+3= v_0  >   \cdots > v_{s}  > v_{s+1}=1 }    \    \prod_{u=0}^{s}   \\   
&\Bigg[(1-\psi(v_{u+1}) )   \    \Pi(v_{u+1}+2,v_u)   \\
+ \sum_{t=1}^{v_{u+1}-1} &(1-\psi(t) )     \sum_{r=1}^{\min\{v_{u}-v_{u+1}-1,v_{u+1}-t\}}  
  \!  \!  \sum_{d_1 + \cdots + d_r = v_{u+1}-t}  
 \Sigma\Pi (t+2,v_{u})[d_1+1, \cdots, d_r+1]     \Bigg]  
\end{aligned}
$$

\end{corollary}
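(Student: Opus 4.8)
The plan is to obtain Corollary~\ref{extensiveformulacorollary2} by purely mechanical substitution into results already proved: Proposition~\ref{Cnnandan}, the closed form for the entries $c_n^k$ of $P^{-1}$ recorded in subsection~\ref{matricesnewnotation}, and the formula for $b_j^k$ of Theorem~\ref{polbkj}. No new idea is needed; the content is bookkeeping, and the companion statement Corollary~\ref{extensiveformulacorollary} is proved in the same spirit, substituting~(\ref{formulabjk}) into Theorem~\ref{maintheorem} instead.

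\textbf{Step 1: reduce $a_n$ to $c_{n+3}^1$.} From Proposition~\ref{Cnnandan}, applied with $n$ replaced by $n+1$, together with the identification $u_{n+1}=q^{n+2}a_n$ from the dictionary preceding Theorem~\ref{maintheorem}, one gets for $n\ge 1$
$$ c_{n+3}^1 = -(1-q^2)\,u_{n+1} = -q^{n+2}(1-q^2)\,a_n, \qquad\text{equivalently}\qquad a_n = -\frac{c_{n+3}^1}{q^{n+2}(1-q^2)} . $$
This is exactly the identity quoted just above the statement of the corollary.

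\textbf{Step 2: expand $c_{n+3}^1$ in the $b_j^k$.} Substitute in the previous line the closed form for $c_n^k$ obtained in subsection~\ref{matricesnewnotation},
$$ c_n^k = \sum_{s=0}^{n-1-k}(-1)^{s+1}\sum_{n=v_0>\cdots>v_s>v_{s+1}=k}\ \prod_{u=0}^{s}b_{v_u}^{v_{u+1}}, $$
read off at $k=1$ with $n$ replaced by $n+3$ (so that the outer range becomes $0\le s\le n+1$). This reproduces verbatim the first displayed formula of the corollary.

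\textbf{Step 3: plug in Theorem~\ref{polbkj}.} Replace each factor $b_{v_u}^{v_{u+1}}$ by its value given in formula~(\ref{formulabjk}). The one point to verify is that every such factor falls in the range $1\le v_{u+1}\le v_u-1$ for which Theorem~\ref{polbkj} supplies a closed form; this is immediate since the chain $v_0>v_1>\cdots>v_s>v_{s+1}=1$ is strictly decreasing, so $1\le v_{u+1}\le v_u-1$ for every $u$. Writing out $b(v_u,v_{u+1},t)$ for $t=v_{u+1}$ (the term $\Pi(v_{u+1}+2,v_u)$) and for $t<v_{u+1}$ (the $\Sigma\Pi$-sums, renaming the gap-count index to $r$ so as not to clash with the product index $u$, with bound $\min\{v_u-v_{u+1}-1,\,v_{u+1}-t\}$) produces exactly the second displayed formula. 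Since everything in this step is substitution, the only care needed is to keep the index shifts and the summation bounds consistent with those of Theorem~\ref{polbkj}; there is no genuine obstacle.
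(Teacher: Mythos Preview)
Your proposal is correct and follows exactly the paper's own route: the corollary is stated as an immediate consequence of Proposition~\ref{Cnnandan} (giving $c_{n+3}^1=-q^{n+2}(1-q^2)a_n$), the closed form for $c_n^k$ from subsection~\ref{matricesnewnotation}, and a straightforward substitution of formula~(\ref{formulabjk}) for the $b_j^k$. The paper itself offers no more than the one-line observation preceding the corollary, and your three steps merely make that substitution explicit.
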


\renewcommand{\refname}{REFERENCES}

\end{document}